\newcolumntype{L}[1]{>{\raggedright\let\newline\\\arraybackslash\hspace{0pt}}m{#1}}
\newcolumntype{C}[1]{>{\centering\let\newline\\\arraybackslash\hspace{0pt}}m{#1}}
\newcolumntype{R}[1]{>{\raggedleft\let\newline\\\arraybackslash\hspace{0pt}}m{#1}}
\newtheorem{theorem}{Theorem}[section]
\newtheorem{lemma}[theorem]{Lemma}
\newtheorem{remark}[theorem]{Remark}
\newtheorem{proposition}[theorem]{Proposition}
\newcommand{\be}{\begin{equation}}
\newcommand{\ee}{\end{equation}}
\colorlet{Mycolor1}{green!10!orange!90!}
\newcommand{\armin}[1]{{\color{black} #1}}
\newcommand{\R}{\mathbb{R}}
\newcommand{\St}{\mathrm{s.t.}}
\newcommand{\mc}{\mathcal}
\newcommand{\opt}{^\star}
\newcommand{\ds}{\displaystyle}
\newcommand{\diag}{\mathrm{diag}}
\newcommand{\BR}{\mathrm{BR}}
\newcommand{\Let}{\triangleq}
\newcommand{\Sym}{\mathbb{S}}
\newcommand{\PSD}{\Sym_+}
\newcommand{\J}{J^\star}
\renewcommand{\d}{d^\star} 
\newcommand{\ind}{\mathds{1}}
\icmltitlerunning{Principal Component Hierarchy for Sparse Quadratic Programs}
\begin{document}

\twocolumn[
\icmltitle{Principal Component Hierarchy for Sparse Quadratic Programs}




\begin{icmlauthorlist}
\icmlauthor{Robbie Vreugdenhil}{delft}
\icmlauthor{Viet Anh Nguyen}{stanford,vinai}
\icmlauthor{Armin Eftekhari}{umea}
\icmlauthor{Peyman Mohajerin Esfahani}{delft}
\end{icmlauthorlist}

\icmlaffiliation{delft}{Delft Center for Systems and Control, Delft University of Technology}
\icmlaffiliation{stanford}{Department of Management Science and Engineering, Stanford University}
\icmlaffiliation{vinai}{VinAI Research, Vietnam}
\icmlaffiliation{umea}{Department of Mathematics and Mathematical Statistics, Umea University}

\icmlcorrespondingauthor{Robbie Vreugdenhil}{robbievreugdenhil@gmail.com}

\icmlkeywords{Machine Learning, ICML}

\vskip 0.3in
]



\printAffiliationsAndNotice{}  


\begin{abstract}
We propose a novel approximation hierarchy for cardinality-constrained, convex quadratic programs that exploits the rank-dominating eigenvectors of the quadratic matrix. Each level of approximation admits a min-max characterization whose objective function can be optimized over the binary variables analytically, while preserving convexity in the continuous variables. Exploiting this property, we propose two scalable optimization algorithms, coined as the ``{\em best response}" and the ``{\em dual program}", that can efficiently screen the potential indices of the nonzero elements of the original program. We show that the proposed methods are competitive with the existing screening methods in the current sparse regression literature, and it is particularly fast on instances with high number of measurements in experiments with both synthetic and real datasets.
\end{abstract}

\section{Introduction} \label{sec:introduction}

Sparsity is a powerful inductive bias that improves the interpretability and performance of many regression models~\cite{ribeiro2016should,ref:hastie2009elements,ref:bertsimas2017trimmed}. Recent years have witnessed a growing interest in sparsity-based methods and algorithms for sparse recovery, mostly in the setting of sparse linear regression~\cite{ref:atamturk2020safe,ref:bertsimas2017sparse,ref:hazimeh2020sparse,ref:hastie2017extended}. 

In this paper we study \armin{the} more general problem of sparse linearly-constrained quadratic programming with a regularization term. Sparsity is imposed in this context by controlling the $\ell_0$ norm of the estimator \cite{ref:miller2002subset}. More specifically, we consider the problem 
\begin{equation} \label{eq:P} \tag{$\mathcal P$}
\begin{array}{rcl}
    \J \Let & \min & {\langle c, x\rangle}+\langle x, Q x\rangle + {\eta^{-1}}\|x\|_2^2 \vspace{1mm}\\
    & \St & x\in\R^n \vspace{1mm}\\
    & & A x \leq b, \quad \|x\|_0 \leq s,
\end{array}
\end{equation}
 where $Q \in \PSD^n$, $c \in \R^n$, $A \in \R^{m \times n}$, $b \in \R^m$, \armin{and the integer~$s\le n$ specifies the target sparsity level.}  
 The ridge regularization term $\eta^{-1} \|x\|_2^2$~\cite{ref:hoerl1970ridge} in the objective function reduces the mean squared error when the data is affected by noise and/or uncertainty \cite{ref:ghaoui1997robust,ref:mazumder2020subset}. 
 
 
Problem~\eqref{eq:P} arises in a wide 
\armin{range}
of applications,
\armin{including}
sparse linear regression, model predictive control \cite{ref:aguilera2014quadratic}, portfolio optimization \cite{ref:bertsimas2020scalable}, binary quadratic programming and principal component analysis  \cite{ref:bertsimas2020unified}.

\textbf{Motivation for our approach.}
Throughout, suppose that~$Q$ in problem~\eqref{eq:P} admits the eigendecomposition~$Q=\sum_{i=1}^{n} \lambda_{i}  v_{i} v_{i}^\top$, 
where $\lambda_1 \ge \cdots \ge \lambda_n \ge 0$ are the eigenvalues of $Q$. Our approach to solve~\eqref{eq:P} is centered around the following key observation: 
\begin{tcolorbox}[colback=white!5!white,colframe=black!75!black]
  \textbf{Observation.} 
  For many real-world applications, the matrix $Q$ is \textit{nearly low-rank}.
\end{tcolorbox}
The concept of \textit{nearly low-rank} is contextual. In this paper, we say that $Q$ is nearly low-rank if 
a low-rank matrix can approximate $Q$ to a reasonable accuracy, where the accuracy is measured by a matrix norm.



It is instructive to demonstrate the nearly low-rank property of $Q$ in the context of linear regression. Given~$N$ training samples $\{(\xi_i, \omega_i)\}_{i=1}^N \subset \R^{n}\times \R$, the sparse ridge regression problem
 \begin{equation}
    \begin{array}{cl}
        \min\limits_{x: \|x\|_0 \le s} & \displaystyle \frac{1}{N} \sum_{i=1}^N \|\xi_i - x^\top \omega_i\|_2^2 + \eta^{-1} \|x\|_2^2
    \end{array}
    \label{eq:ridge}
 \end{equation}
 coincides with~\eqref{eq:P} for the choice of 
 \be \label{eq:Qc}
 Q = \frac{1}{N} \sum_{i=1}^{N} \omega_i \omega_i^\top, \quad  c = \frac{1}{N} \sum_{i=1}^N \xi_i \omega_i.
 \ee
 In high-dimensional regression ($N<n$), the matrix $Q$ is automatically rank-deficient. Moreover, $Q$ 
 can also be nearly low-rank
 even when $N > n$.
\armin{As a numerical example,} for the UCI Superconductivity dataset\footnote{Available at \url{https://archive.ics.uci.edu/ml/datasets/Superconductivty+Data}}, we randomly select~$70\%$ of the dataset as training samples, and
\armin{then} compute the matrix~$Q$, as specified in~\eqref{eq:Qc}. Figure~\ref{fig:EVdecay} plots the empirical distribution of the largest eigenvalues of $Q$, taken over 100 independent replications. On average, the ratio~$\lambda_1/\lambda_{10}$ between the $1^{\text{st}}$ largest and the $10^{\text{th}}$ largest eigenvalues of $Q$ is 97.2. We can also observe that the magnitude of the eigenvalues decays 
relatively fast for this dataset. 

\begin{figure}[H]
    \centering
    \includegraphics[width=7cm]{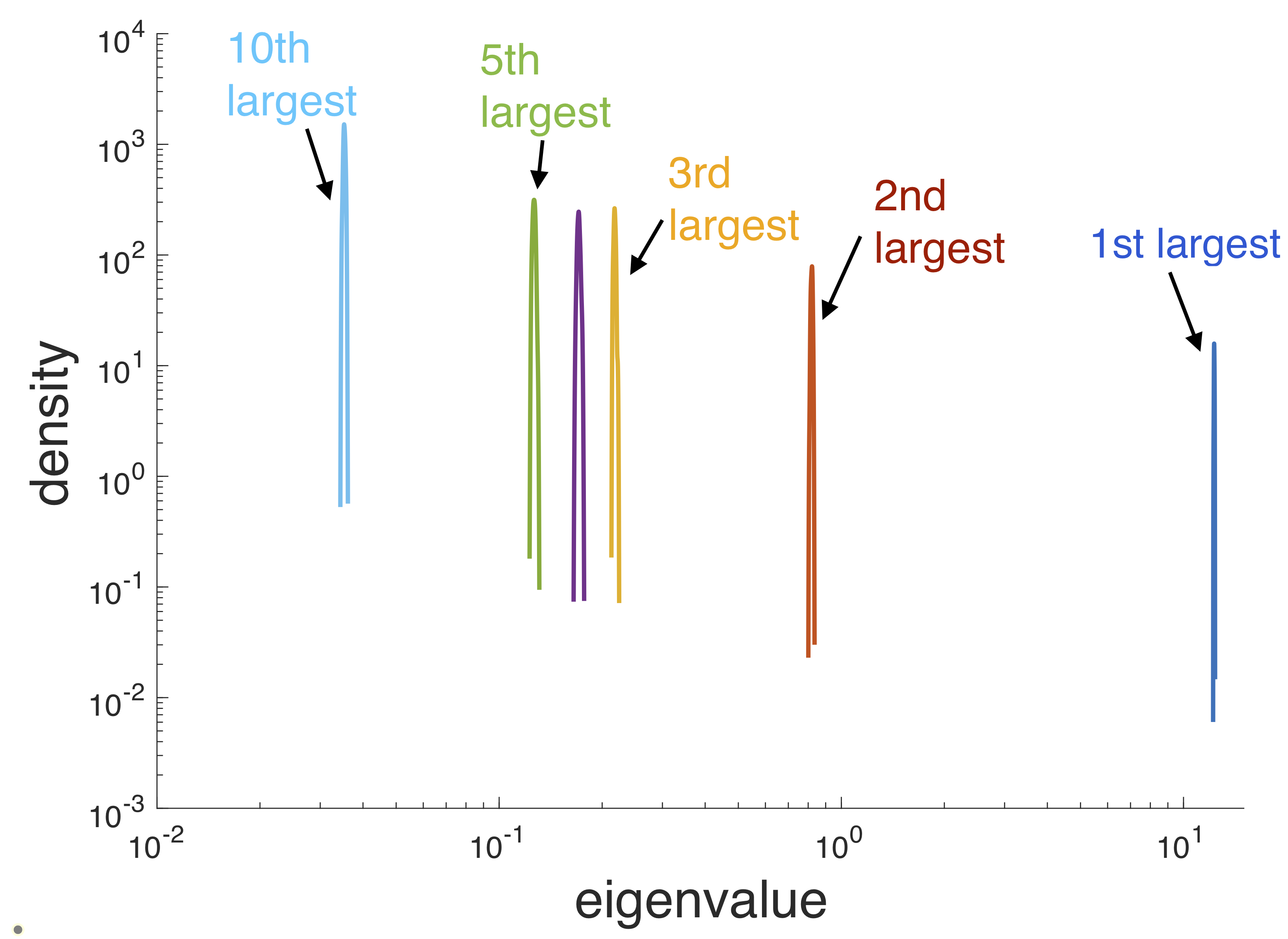}
    \caption{Empirical distribution of the eigenvalues of the matrix $Q$ for the Superconductivity dataset (plotted in log-log scale).}
   \label{fig:EVdecay}
\end{figure}
Next, let us recall the low-rank truncation of the matrix~$Q$. For $k\le n$,
the approximation of $Q$ using its $k$ leading eigenvectors, denoted by~$Q_{k}$, is given by
\begin{align*}
Q_{k} &\triangleq \sum_{i=1} ^{k} \lambda_{i}  v_{i} v_{i} ^\top.
\end{align*}
Following the same sampling procedure described above, Figure~\ref{fig:Frobdecay} depicts how the average truncation error, measured by the Frobenius norm $\| Q_k - Q\|_F$, rapidly decreases as $k$ increases. \armin{In this figure,} we also report the minimum 
\armin{dimension}~$\hat{k}$ so that the reconstruction error of $Q_{\hat{k}}$ falls below $10\%$ that of the rank-1 approximation $Q_1$. We observe that~$\hat{k} \ll n$ for many UCI regression datasets. \armin{This nearly low-rank behavior is typical in data sciences~\cite{udell2019big}.}
\begin{figure}[h!]
    \centering
    \includegraphics[width=8cm, trim={0 9cm 0  9cm},clip]{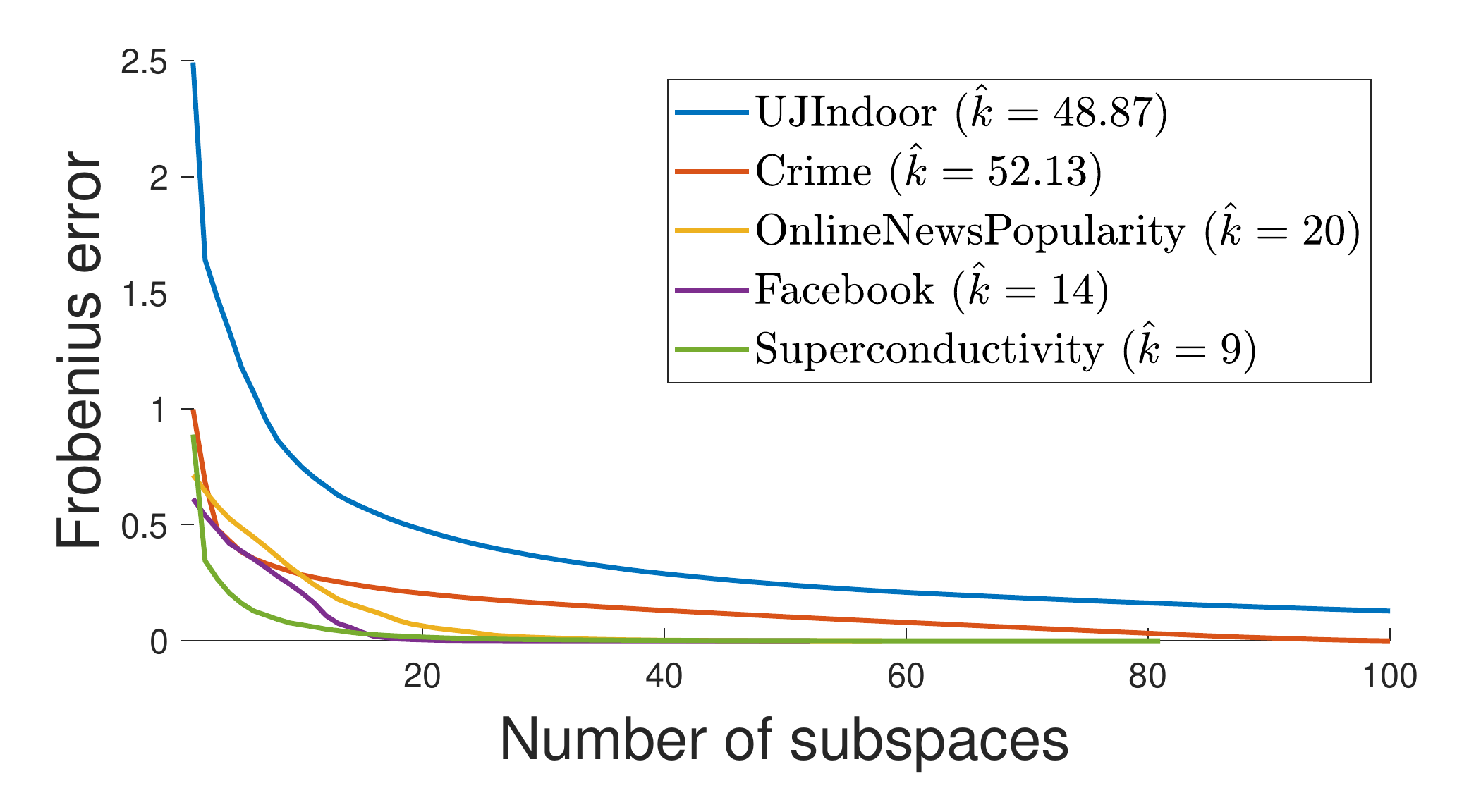}
    \caption{Empirical Frobenius reconstruction error~$\|Q_k - Q\|_F$ averaged over 100 independent replications.}
    \label{fig:Frobdecay}
\end{figure}

\textbf{Contributions.}~We summarize the contributions as follows.

\begin{enumerate}[label=(\roman*), leftmargin=7mm]
    \item {\bf Hierarchy \armin{of} approximations and min-max characterization:}
    We exploit the nearly low-rank nature of the 
    matrix $Q$ and propose a hierarchy of approximations for cardinality-constrained convex quadratic programs. 
    This hierarchy enables us to strike a balance between the scalability of the 
    \armin{solver}
    and the quality of the solution. We further show that each $k$-leading \armin{spectral truncation}
    \armin{can be characterized as}
    a min-max problem~(Proposition~\ref{prop:minmax}).

    \item {\bf Scalable and certifiable algorithms:} We propose two scalable algorithms that enjoy optimality certificates if the min-max characterization admits a saddle point (Propositions~\ref{prop:BR} and \ref{prop:DA}). The proposed algorithms build on a desirable feature of the objective function of the min-max characterization, in which the minimizer over the binary variables admits a closed-form solution~(Lemma~\ref{lem:closed form}). 
    
    \item {\bf Safe screening:} If the min-max characterizations do not admit a saddle point, the proposed iterative algorithms can serve as a screening method to reduce the variables in the original problem. We investigate the effectiveness of our algorithms through an in-depth numerical comparison with the recent sparse regression literature including the safe screening procedures of \citet{ref:atamturk2020safe} and the warm start of \citet{ref:bertsimas2017sparse}. Moreover, we also benchmark against direct optimization methods~\citet{ref:beck2012sparsity} and \citet{ref:ganzhao2020block}. Experiments on both synthetic and real datasets reveal that our algorithms deliver promising results (Section~\ref{sec:numerical}). 
    \end{enumerate}

We note that using leading principal components in regression has a long-standing 
\armin{history}~\cite{ref:naes1988principal,ref:hastie2009elements,ref:baye1984combining}. However, to the best of our knowledge, it has never been applied 
\armin{in the context of}
cardinality-constrained convex quadratic problems. Compared to the existing solution procedures in the literature, the performance of our methods, measured by both the objective value and the screening capacity, is consistent across a wider range of input parameters. Moreover, \armin{in sparse regression,} our methods can scale to problems of large sample size $N$ because the complexity of our methods does not  depend on $N$ \armin{poorly.}



\begin{figure*}[h!]
    \begin{center}
    \includegraphics[width=0.8\linewidth]{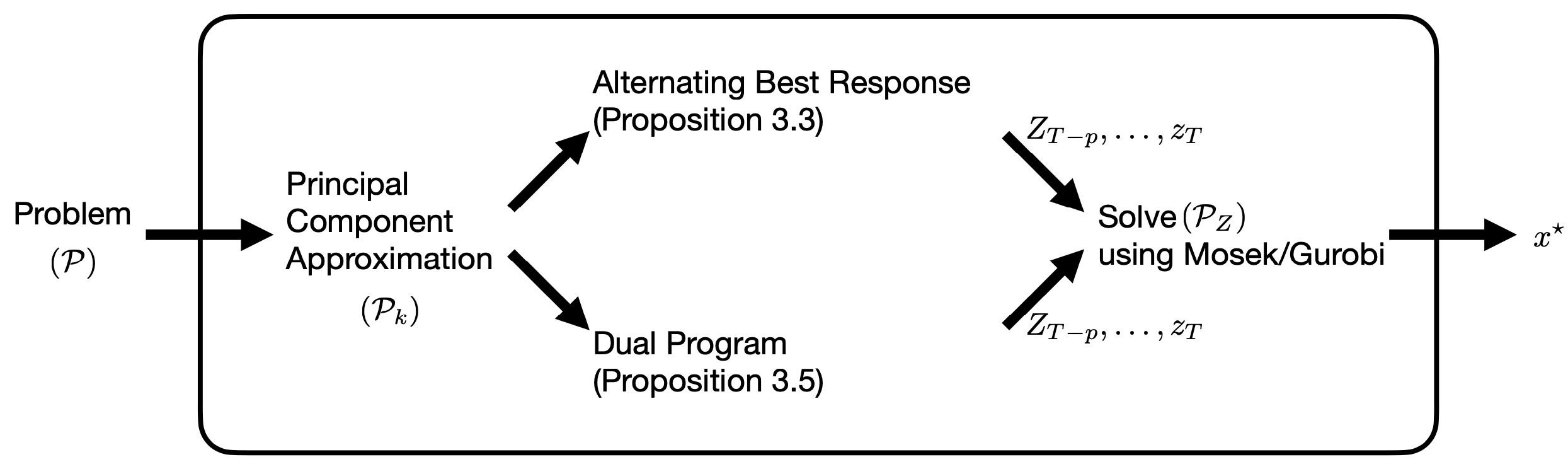}
    \vspace{-2mm}
    \caption{Schematic overview of the principal component approximation to sparsity-constrained quadratic programs.}
    \label{fig:pipeline}
    \end{center}
    \vspace{-2mm}
\end{figure*}

This paper unfolds as follows. Section~\ref{sec:literature} provides a brief overview over the landscape of the cardinality-constrained quadratic problems. Section~\ref{sec:PCA} devises two distinctive algorithms that leverage the principal component approximation of the matrix $Q$. Section~\ref{sec:numerical} reports an in-depth numerical comparison between our algorithms and the current sparse regression literature.

\textbf{Notations.} For any matrix $u$, we use $\sqrt{u}$ and $|u|$ to denote the component-wise square root and absolute value of $u$, respectively. For a vector~$u$, we use $\diag(u)$ to denote the diagonal matrix formed by $u$. For an integer $n$, we also denote with $[n]$ the set of integers $\{1,\dots,n\}$. Given an index set~$\mathbb A \subset [n]$, the binary vector~$u = \ind_n({\mathbb A}) \in \{0,1\}^n$ is defined as $u_j = 1$ if and only if $j \in \mathbb A$. In words, $\ind_n(\mathbb{A})$ is the indicator vector of the index set $\mathbb{A}$.


\section{Literature Review} 
\label{sec:literature}

\textbf{Sparse regression. }
In general, the sparse regression problem is NP-hard \cite{ref:natarajan1995sparse}. 
Until recently the sparse optimization literature largely focused on convex heuristics for sparse regression, e.g., Lasso ($\ell_1$) \cite{ref:tibshirani1996regression} and Elastic Net ($\ell_1 -\ell_2$) \cite{ref:zou2005regularization}.
Despite their scalability, convex heuristic approaches are inherently biased since the $\ell_1$-norm penalizes large and small coefficients uniformly. In contrast, solvers that directly tackle sparse regression do not suffer from unwanted shrinkage and have enjoyed a resurgence of interest~\cite{ref:bertsimas2017sparse,ref:hazimeh2020sparse,ref:dedieu2020learning,ref:gomez2018mixed}, thanks to the recent
breakthroughs in mixed-integer programming \cite{ref:bertsimas2016best}. These direct approaches are also the focus of this paper.

In particular, \citet{ref:bertsimas2017sparse} devise a convex reformulation of the sparse regression problem using duality theory, the solution of which provides a warm start for a branch-and-cut algorithm. This method can solve sparse regression problems at the scale of $n \approx 10^5$ while the earlier work~\cite{ref:bertsimas2016best} only goes to sizes of $n \approx 10^3$. However, as pointed out by \citet{ref:xie2020scalable}, the performance of these algorithms depends critically on the speed of the commercial solvers and varies significantly from one dataset to another. Therefore we focus on the warm start method in the approach of~\citet{ref:bertsimas2017sparse}, which makes use of the kernel matrix~$[\omega_i^\top \omega_j]_{i,j}$. Notice that the size of this kernel matrix scales with the number of samples. On the contrary, our proposed solution procedures use only the resulting matrix $Q$, whose size does not depend on the number of samples. 

Convex approximations of the sparse regression can however be used as a safe screening procedure as demonstrated by \citet{ref:atamturk2020safe}. Safe screening methods aim to identify the support of the solution set of~\eqref{eq:P} and reduce the problem dimension $n$ before invoking an MIQP solver.
Indeed, if we correctly rule out any single suboptimal dimension, the solution space would be cut by half. In this sense, the expected speedup for the MIQP solver is exponential \cite{ref:atamturk2020safe}.

\textbf{Sparse quadratic programs. }
We are only aware of a few papers that solve sparse quadratic programs exactly. 
\citet{ref:beck2012sparsity} and~\citet{ref:beck2015on} devise coordinate descent type algorithms based on the concept of coordinate-wise optimality, which updates the support at each iteration and, in particular, can also be used to solve sparse quadratic programs. \citet{ref:ganzhao2020block} considers a block decomposition algorithm that combines combinatorial search and coordinate descent. Specifically, this method uses a random or greedy strategy to find the working set and then performs a global combinatorial search over the working set based on the original objective function. Recently, \citet{ref:bertsimas2020scalable} and \citet{ref:bertsimas2020unified} apply the advances in exact sparse regression to sparse quadratic programs to solve problems of higher dimension. Both essentially rewrite \eqref{eq:P} as a sparse regression problem
and then solve it with a modified version of the branch-and-cut algorithm of \citet{ref:bertsimas2017sparse}. This procedure can also accommodate
linear constraints. 


Given the existing literature mentioned above, to our best of knowledge, our approach is the first to
identify and exploit the low-rank structure of $Q$ in the original problem~\eqref{eq:P} by using the leading principal component approximation of $Q$, notably in the context of sparse quadratic programming.
\section{Principal Component Approximation}
\label{sec:PCA}

The key idea of this study is to leverage the principal component approximation of the matrix $Q$ in \eqref{eq:P} in order to deploy the duality technique from convex optimization in a more efficient manner. To this end, we introduce additional continuous variables~$y$ along with the equality constraints
$ \sqrt{\lambda_i} y_{i}= \sqrt{\lambda_i} \left\langle v_{i}, x\right\rangle$ where $\lambda_i$ and $v_i$ are, respectively, the eigenvalues and eigenvectors of the matrix~$Q$. Throughout, we denote $V = [v_1, \ldots, v_k] \in \R^{d \times k}$. Using these definitions, program~\eqref{eq:P} can be approximated via
\begin{equation} \label{eq:P_k} \tag{$\mathcal{P}_k$}
\begin{array}{rcl}
\J_k \Let & \min & \langle c, x\rangle + \sum\limits_{i=1} ^{k} \lambda_{i}  y_{i}^{2} + {\eta^{-1}} \|x\|_2^2 \vspace{1mm}\\
    & \St & x \in\mathbb{R}^n,~y \in \R^k \vspace{1mm}\\ 
    && A x \leq b, \quad \|x\|_0 \leq s \vspace{1mm}\\
    & & \sqrt{\lambda_{i}}y_{i} = \sqrt{\lambda_{i}}\left\langle v_{i}, x\right\rangle, \quad  i\in [k].
\end{array}
\end{equation}
The last equality constraints of~\eqref{eq:P_k} are scaled using the coefficients~$\sqrt{\lambda_i}$ to improve numerical stability. Since the matrix~$Q$ is positive semidefinite, we have the hierarchy of approximations in which the sequence of optimal values~$\J_k$ preserves the order
\[
    \J_1 \le \J_2 \le \cdots \le \J_n = \J,
\]
where $\J$ is the optimal value of program~\eqref{eq:P}. In fact, one can observe that the two programs~\eqref{eq:P} and \eqref{eq:P_k} are equivalent when $k = n$. Next, we use the standard convex duality to turn program~\eqref{eq:P_k} into a min-max optimization problem. 
\begin{proposition}[Min-max characterization] \label{prop:minmax}
For each $k \le n$, the optimal value~$\J_k$ of \eqref{eq:P_k} is equal to 
\be\tag{$\mc M_k$} \label{eq:M_k}
    \J_k = \min\limits_{\substack{z \in \{0, 1\}^n \\ \sum z_j \le s}}~
    \max\limits_{\substack{ \alpha \in \mathbb{R} ^{k} \\\beta \in \R_+^m}}~L(z, \alpha, \beta),
\ee
where the objective function $L$ is defined as
\begin{align}\label{eq:L}
&L(z, \alpha, \beta) \Let -\beta^\top b -\frac{1}{4} \| \alpha\|_2^2  \\
& -\frac{\eta}{4}  (c+ V \sqrt{\Lambda} \alpha + A^\top \beta)^\top \diag(z) (c+ V \sqrt{\Lambda} \alpha + A^\top \beta), \notag
\end{align}
in which $\Lambda = \diag\{\lambda_1, \cdots,\lambda_k\}$ is a diagonal matrix whose elements on the main diagonal are the first $k$ largest eigenvalues of the matrix~$Q$. Moreover, the nonzero coordinates of the optimal variable~$x^\star$ in~\eqref{eq:P_k} contain the nonzero elements of the optimal solution~$z^\star$ in~\eqref{eq:M_k}.
\end{proposition}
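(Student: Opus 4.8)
The plan is to remove the combinatorial cardinality constraint through an outer binary ``support'' variable and then dualize the resulting convex quadratic program in the continuous variables. First I would record that $\|x\|_0\le s$ is equivalent to the existence of $z\in\{0,1\}^n$ with $\sum_j z_j\le s$ and $\diag(z)x=x$ (that is, $x_j=0$ whenever $z_j=0$). Partitioning the feasible set of \eqref{eq:P_k} by this support indicator gives $\J_k=\min_{z}\, g(z)$, where
\[ g(z) \Let \min_{x,y}\Big\{\langle c,x\rangle+\textstyle\sum_{i}\lambda_i y_i^2+\eta^{-1}\|x\|_2^2 \;:\; Ax\le b,\ \diag(z)x=x,\ \sqrt{\lambda_i}\,y_i=\sqrt{\lambda_i}\langle v_i,x\rangle\Big\} \]
is a convex program in $(x,y)$ for each fixed $z$, and the outer minimization ranges over the binary feasible set appearing in \eqref{eq:M_k}.

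Second, for fixed $z$ I would attach a multiplier $\beta\in\R_+^m$ to $Ax\le b$ and $\alpha_i$ to each equality $\sqrt{\lambda_i}\langle v_i,x\rangle-\sqrt{\lambda_i}\,y_i=0$, forming the Lagrangian $\L(x,y,\alpha,\beta)$. The minimization over $y$ decouples coordinatewise: completing the square in each $y_i$ (for $\lambda_i>0$) produces the term $-\tfrac{1}{4}\|\alpha\|_2^2$, and the residual $x$-dependent part is $\eta^{-1}\|x\|_2^2+\langle w,x\rangle-\beta^\top b$ with $w\Let c+V\sqrt{\Lambda}\alpha+A^\top\beta$. Since $\eta^{-1}>0$, the minimization over the free coordinates $\{j:z_j=1\}$ is again separable and strongly convex, giving $x_j^\star=-\tfrac{\eta}{2}w_j$ and optimal value $-\tfrac{\eta}{4}w^\top\diag(z)w$. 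Collecting terms reproduces exactly $L(z,\alpha,\beta)$ of \eqref{eq:L}, so $\min_{x,y}\L=L(z,\alpha,\beta)$. I would note in passing that when $\lambda_i=0$ the corresponding $\alpha_i$ enters neither $w$ nor any constraint, so $\max_\alpha$ forces it to zero and the expression $-\tfrac{1}{4}\|\alpha\|_2^2$ stays valid.

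Third, I would invoke strong duality to exchange $\min_{x,y}$ and $\max_{\alpha,\beta}$ for each feasible $z$, obtaining $g(z)=\max_{\alpha,\beta}L(z,\alpha,\beta)$; substituting into $\J_k=\min_z g(z)$ yields \eqref{eq:M_k}. The justification is that the inner program has only affine constraints and an objective that is coercive in $x$ (strong convexity from $\eta^{-1}\|x\|_2^2$) and bounded below, so the minimum is attained and the convex duality gap vanishes without a Slater point, by the affine constraint qualification; any $z$ for which $Ax\le b$ is infeasible on $\mathrm{supp}(z)$ contributes $+\infty$ to both sides and is immaterial to the outer minimum.

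Finally, for the support statement I would reuse the recovery map above: given an optimal $z^\star$ of \eqref{eq:M_k} and a maximizer $(\alpha^\star,\beta^\star)$, the induced primal optimizer satisfies $x_j^\star=-\tfrac{\eta}{2}z_j^\star w_j^\star$, which immediately gives $\mathrm{supp}(x^\star)\subseteq\mathrm{supp}(z^\star)$. The asserted inclusion is the reverse one, and this is where I expect the main obstacle: it requires that an optimal $z^\star$ never spend a unit of budget on an index $j$ with $w_j^\star=0$ (equivalently $x_j^\star=0$). I would settle this by optimality and complementary slackness --- such an index is redundant, so deactivating it preserves feasibility and leaves $L(\cdot,\alpha^\star,\beta^\star)$ unchanged, letting us take an optimal $z^\star$ whose active set coincides with $\mathrm{supp}(x^\star)$; equivalently, starting from any optimal $x^\star$ of \eqref{eq:P_k}, the indicator $\ind_n(\mathrm{supp}(x^\star))$ is itself optimal for \eqref{eq:M_k}. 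Apart from this support-correspondence subtlety, the only delicate point is the constraint qualification for the min--max interchange, which the affine constraints and strong convexity in $x$ resolve.
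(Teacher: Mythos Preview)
Your proof is correct and follows the same strategy as the paper's: fix the binary support $z$, then dualize the resulting convex quadratic program in $(x,y)$ to obtain $L(z,\alpha,\beta)$. The only substantive difference is that you encode the support exactly via $\diag(z)x=x$, whereas the paper uses the big-$M$ constraint $|x_j|\le Mz_j$ and implicitly assumes $M$ is large enough that the bound never binds at the inner minimizer; your formulation is cleaner and avoids that caveat, and you are also more careful than the paper on the strong-duality justification, the $\lambda_i=0$ edge case, and the support-correspondence claim (which the paper's own proof does not address).
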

Thanks to Proposition~\ref{prop:minmax}, the information regarding the support of the optimal solution~$x^\star$ in~\eqref{eq:P_k} can effectively be obtained from the support of the optimal solution~$z^\star$ in~\eqref{eq:M_k}. We note that 
finding the support is indeed the computational bottleneck of program~\eqref{eq:P_k}. A key feature of the objective function~$L$ of program~\eqref{eq:M_k} is that when the variables~$(\alpha,\beta)$ is fixed, the minimization over the binary variable~$z$ can be solved analytically. 

\begin{lemma}[Closed-form minimizer]\label{lem:closed form}
Given any pair~$(\alpha, \beta)$, the minimizer of the function~$L$ defined in~\eqref{eq:L} can be computed as
\begin{align}\label{eq:J-set}
    &\arg\min_{\substack{z \in \{0, 1\}^n \\ \sum z_j \le s}} L(z, \alpha, \beta)  = \ind_n\big(\mathcal{J}(\alpha,\beta)\big), \quad \text{where}\\
    & \mathcal{J}(\alpha,\beta) \Let \left\{ j \in [n] :\begin{array}{c}
         \text{$j$ is an index of the $s$-largest}  \\
         \text{elements of the vector} \\
         ~c+ V \sqrt{\Lambda} \alpha+A^\top \beta
    \end{array}\right\}, \notag
\end{align}
and $\ind_n(\mathcal{J}) \in \{0,1\}^n$ is a binary vector whose coordinates contained in the set~$\mathcal{J}$ are ones. 
\end{lemma}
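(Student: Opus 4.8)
The plan is to recognize that $L$ splits into a part independent of $z$ and a single separable term in the entries of $z$, reducing the inner minimization to an elementary greedy-selection problem.

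First I would isolate the dependence on $z$. Setting $w \Let c + V\sqrt{\Lambda}\,\alpha + A^\top\beta$, the terms $-\beta^\top b - \tfrac14\|\alpha\|_2^2$ are constant in $z$, while the remaining term expands, because $\diag(z)$ is diagonal, as
\[
-\frac{\eta}{4}\, w^\top \diag(z)\, w \;=\; -\frac{\eta}{4}\sum_{j=1}^n z_j\, w_j^2 .
\]
Hence $L(z,\alpha,\beta)$ equals a $z$-independent constant minus $\tfrac{\eta}{4}\sum_{j} z_j w_j^2$, and since $\eta>0$, minimizing $L$ over $z$ is equivalent to maximizing $\sum_{j=1}^n z_j w_j^2$ subject to $z\in\{0,1\}^n$ and $\sum_j z_j\le s$.

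Next I would solve this combinatorial program. Each coefficient satisfies $w_j^2\ge 0$, so the objective is nondecreasing in every $z_j$ and the cardinality budget is used in full; the problem is then the cardinality-constrained maximization of a nonnegative linear functional over the binary hypercube, whose maximizer places the $s$ ones on the indices carrying the largest coefficients $w_j^2$, equivalently the largest $|w_j|$. This selection is precisely $\mathcal{J}(\alpha,\beta)$, which yields $\arg\min_z L(z,\alpha,\beta) = \ind_n(\mathcal{J}(\alpha,\beta))$.

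I do not expect a genuine obstacle, as the statement is at bottom a top-$s$ greedy fact. The only subtlety worth a remark is the meaning of the \emph{$s$-largest elements}: because $L$ depends on $z$ only through the squares $w_j^2$, these are the $s$ entries largest in magnitude, and when two boundary coefficients coincide the maximizer is non-unique, so $\mathcal{J}(\alpha,\beta)$ (and the displayed $\arg\min$) should be read as any valid such selection.
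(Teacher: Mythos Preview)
Your argument is correct and matches the paper's (largely implicit) reasoning: the paper derives the separable form $L(z,\alpha,\beta)=\text{const}-\tfrac{\eta}{4}\sum_j \gamma_j^2 z_j$ in the proof of Proposition~3.1 and treats the lemma as immediate from there, which is exactly your decomposition plus the greedy top-$s$ observation. Your remark that the phrase ``$s$-largest elements'' must be read as largest in absolute value (since $L$ depends on $z$ only through $w_j^2$) is well taken and resolves a genuine ambiguity in the statement; the paper's later handling of ties via a deterministic tiebreaker aligns with your non-uniqueness caveat.
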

We note that the set of optimal indices defined in~$\mc J(\alpha, \beta)$ may not be unique. In such scenarios, we adopt a deterministic tiebreaker (e.g., a lexicographic rule) to introduce~$\mc J$ as a proper single-valued function. The observation in~\eqref{eq:J-set} is the key building block for two scalable optimization algorithms that we will propose to tackle problem~\eqref{eq:P_k}.

\subsection{Alternating best response}
\label{sec:br}

The first proposed algorithm can be cast as an attempt to find a saddle point (a Nash equilibrium) of program~\eqref{eq:M_k}, if it exists. {This is similar to the approach discussed in~\citet[Theorem~3.2.3]{ref:bertsimas2020solving}, which concerns the different problem of sparse PCA. Given the inherent nonconvexity of the problem due to the binary variables~$z$, such an equilibrium may not exist. In that case, the algorithm will not converge. Nonetheless, we will also discuss how this approach can still be viewed as a ``safe screening" scheme~\cite{ref:atamturk2020safe}. 

Given $z \in \{0,1\}^n$, we define the function 
\begin{align}\label{eq:BR}
    \BR(z) \Let \arg \max_{\substack{\alpha\in\R^k\\\beta\in\R^m_+}} L(z, \alpha, \beta). 
\end{align}
If the function~$L$ in~\eqref{eq:BR} does not have a unique optimizer, in a similar fashion as as $\mc J$ defined in Lemma~\ref{lem:closed form}, we deploy a deterministic tiebreaker to properly introduce a single-valued function $\BR$. The function $\BR$ is the maximizer of the loss function for a fixed value of $z$, to which we refer as the ``best response". While the objective function~$L$ is a jointly concave quadratic function in the variables $(\alpha,\beta)$, the description of the optimizer does not necessarily have an explicit description due to the constraint $\beta \ge 0$. However, in the absence of the constraint~$Ax \le b$ in \eqref{eq:P_k}~(e.g., $A = 0, b = 0$), the function~\eqref{eq:BR} can be explicitly described as
\begin{align}\label{eq:BR-explicit}
    \BR(z) =  - (I_k/\eta +  \sqrt{\Lambda} V^\top \diag(z) V \sqrt{\Lambda})^{-1} \times\\  \sqrt{\Lambda} V^\top \diag(z)c.\notag
\end{align}
We note that we slightly abuse the notation as the explicit description~\eqref{eq:BR-explicit} is indeed the fist element~($\alpha$-component) of the original definition~\eqref{eq:BR}.

\begin{proposition}[Alternating best response]
\label{prop:BR}
Consider the set of update rules 
\begin{align}\label{alg:BR}
\left\{
\begin{array}{cl}
    \begin{bmatrix} \alpha_{t+1}  \\ \beta_{t+1} \end{bmatrix} & = \BR(z_t)\vspace{1mm}\\
    z_{t+1} & = \ind_n\big(\mathcal{J}(\alpha_{t},\beta_{t})\big),
    \end{array}\right. 
\end{align}
where the set~$\mathcal{J}$ and the function~$\BR$ are defined as in Lemma~\ref{lem:closed form} and \eqref{eq:BR}, respectively. Starting from an initialization~$(z_0,\alpha_0,\beta_0)$, algorithm~\eqref{alg:BR} converges 
after finitely many iterations to a limit cycle. If the set of this period behavior is singleton (i.e., the iterations convergence to a fixed point), then the variable $z$ of the convergence point is the optimal solution of program~\eqref{eq:M_k}. 
\end{proposition}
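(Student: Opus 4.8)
The plan is to treat the two assertions of the proposition separately. For the convergence claim, I would argue that the update rule~\eqref{alg:BR} is, in effect, a \emph{deterministic} dynamical system on a \emph{finite} state space, so a pigeonhole argument forces the orbit to become periodic after finitely many steps. For the optimality claim, I would show that a singleton limit cycle is exactly a saddle point of $L$ over the product domain, and then invoke the standard minimax identity to conclude that the corresponding $z^\star$ solves~\eqref{eq:M_k}.

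First I would make the finiteness precise. The binary iterate lives in the feasible set $\mathcal{Z} \Let \{z \in \{0,1\}^n : \sum_j z_j \le s\}$, whose cardinality $\sum_{j=0}^{s}\binom{n}{j}$ is finite. Because the deterministic tiebreakers render $\BR$ and $\mathcal{J}$ single-valued, the composite $g \Let \ind_n \circ \mathcal{J} \circ \BR$ is a genuine self-map $g:\mathcal{Z}\to\mathcal{Z}$, and for $t \ge 1$ the rule~\eqref{alg:BR} is equivalent to the two-step recursion $z_{t+1} = \ind_n(\mathcal{J}(\BR(z_{t-1}))) = g(z_{t-1})$, with $(\alpha_t,\beta_t)=\BR(z_{t-1})$. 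Hence the even- and odd-indexed subsequences of $\{z_t\}$ are each the orbit of $g$ on the finite set $\mathcal{Z}$; each must revisit a previously seen state within finitely many steps and thereafter repeat. Consequently $\{z_t\}$ is eventually periodic, and since $(\alpha_t,\beta_t)$ is a deterministic function of $z_{t-1}$, the full trajectory $\{(z_t,\alpha_t,\beta_t)\}$ enters a limit cycle after finitely many iterations, proving the first claim.

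For the optimality claim, suppose the limit cycle is a singleton, i.e.\ there is a state $(z^\star,\alpha^\star,\beta^\star)$ fixed by the iteration, so that $(\alpha^\star,\beta^\star)=\BR(z^\star)$ and $z^\star = \ind_n(\mathcal{J}(\alpha^\star,\beta^\star))$. By the definition of $\BR$ in~\eqref{eq:BR} the first equation gives $L(z^\star,\alpha,\beta)\le L(z^\star,\alpha^\star,\beta^\star)$ for all feasible $(\alpha,\beta)$, while Lemma~\ref{lem:closed form} applied to the second equation gives $L(z^\star,\alpha^\star,\beta^\star)\le L(z,\alpha^\star,\beta^\star)$ for all feasible $z$; these are precisely the saddle-point inequalities on $\mathcal{Z}\times(\R^k\times\R_+^m)$. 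I would then run the standard argument: the left inequality shows $(\alpha^\star,\beta^\star)$ attains $\max_{\alpha,\beta}L(z^\star,\alpha,\beta)$, so $\min_z\max_{\alpha,\beta}L \le L(z^\star,\alpha^\star,\beta^\star)$; the right inequality shows $z^\star$ attains $\min_z L(z,\alpha^\star,\beta^\star)$, so $\max_{\alpha,\beta}\min_z L \ge L(z^\star,\alpha^\star,\beta^\star)$; combined with the always-valid weak-duality inequality $\min_z\max_{\alpha,\beta}L \ge \max_{\alpha,\beta}\min_z L$, all four quantities coincide with $L(z^\star,\alpha^\star,\beta^\star)$. In particular $z^\star$ minimizes $z\mapsto\max_{\alpha,\beta}L(z,\alpha,\beta)$, i.e.\ $z^\star$ is optimal for~\eqref{eq:M_k}.

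I expect the first claim, rather than the second, to be the main obstacle. The subtlety is not the periodicity itself (a pigeonhole once finiteness is secured) but justifying that the continuous dual variables $(\alpha,\beta)$ do not enlarge the state space: this hinges on the observation that after the first step $(\alpha_t,\beta_t)$ always equals the best response to one of the finitely many binary iterates, and crucially on the deterministic tiebreakers that make $\BR$ and $\mathcal{J}$ single-valued functions rather than set-valued maps (without them the orbit need not be well-defined and the argument collapses). The saddle-point half is then routine once the fixed-point equations are read off correctly.
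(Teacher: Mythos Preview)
Your proposal is correct and follows essentially the same approach as the paper: a finiteness/pigeonhole argument for eventual periodicity, and the observation that a fixed point satisfies the saddle-point inequalities and hence solves~\eqref{eq:M_k}. Your treatment is in fact slightly more careful than the paper's, which simply asserts that strong concavity in $(\alpha,\beta)$ plus finiteness of the $z$-space forces periodicity; you make explicit the two-step recursion $z_{t+1}=g(z_{t-1})$ and the role of the deterministic tiebreakers in keeping the effective state space finite, points the paper leaves implicit.
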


\begin{proof}
Recall that for any~$z \in \{0, 1\}^n$ the objective function~$L$ in \eqref{eq:BR} is strongly concave, and that admits a unique maximizer. On the other hand, the number of possible binary variable~$z$ is finite and bounded by~$2^n$. These two observations together then imply that the iterations~\eqref{alg:BR} necessarily yield a period behavior with the cardinality at most~$2^n$. When the period is one, it then means that the best response algorithm has an equilibrium, implying that the min-max program~\eqref{eq:M_k} is indeed a minimax game with a Nash equilibrium. Namely, there exist~$(\alpha\opt,\beta\opt) \in \R^k\times\R_+^m$ and $z\opt \in \{0,1\}^n$ such that for all $(\alpha, \beta)\in \R^k\times\R_+^m$, and $z \in \{0,1\}^n$, $\sum z_j \le s$, we have
\begin{align*}
L(z\opt,\alpha,\beta) \le L(z\opt,\alpha\opt,\beta\opt) \le L(z,\alpha\opt,\beta\opt)
\end{align*}
and, by definition, $z\opt$ solves the outer minimization of program~\eqref{eq:M_k}.
\end{proof}

In the iterative scheme~\eqref{alg:BR}, evaluating the best response function~$\BR(z_t)$ is equivalent to solving a linearly constrained convex quadratic program, which can be done efficiently using commercial solvers such as MOSEK~\cite{mosek}. In case we have no linear constraints in the form of~$Ax \le b$, we can also use the explicit description~\eqref{eq:BR-explicit}. Therefore, the algorithm~\eqref{alg:BR} is indeed highly tractable. 

\begin{remark}[Safe screening]\label{rem:safe}
    We expect that the periodic behavior anticipated by Proposition~\ref{prop:BR} typically has a periodicity larger than one. In fact, if the min-max characterization in Proposition~\ref{prop:minmax} is not interchangeable without suffering from a duality gap, 
    then the periodic behavior does have more than one element. 
    In this setting, one can consider all the indices $\mathcal{J}(\alpha_t,\beta_t)$, where $(\alpha_t,\beta_t)$ belongs to the period behavior, as potential candidates for the ones of the optimal vector~$z^\star$ in~\eqref{eq:M_k}. This selection is indeed in accordance with the safe screening terminology of~\citet{ref:atamturk2020safe}. 
\end{remark}

\subsection{Dual program: a subgradient ascent approach}
\label{sec:subgradient}

The second proposed algorithm aims to solve the dual of program~\eqref{eq:M_k} described via
\begin{align}\label{eq:D_k}
\tag{$\mathcal{D}_k$}
    \d_k \Let \max\limits_{\substack{ \alpha \in \mathbb{R} ^{k} \\\beta \in \R_+^m}}~\min\limits_{\substack{z \in \{0, 1\}^n \\ \sum z_j \le s}}~L(z, \alpha, \beta),
\end{align}
where the function~$L$ was defined in~\eqref{eq:L}. Thanks to the weak duality, it is obvious that $\d_k \le \J_k$. The second approach is essentially the application of the subgradient ascent algorithm to the inner minimal function 
\begin{align}\label{eq:f}
    f(\alpha,\beta) \Let \min\limits_{{z \in \{0, 1\}^n, \sum z_j \le s}}~L(z, \alpha, \beta).
\end{align}
We note that the continuous relaxation of the binary variable $z$ from $\{0,1\}^n$ to $[0,1]^n$ in \eqref{eq:f} does not change anything and the program remains equivalent to the original program~\eqref{eq:D_k}.
Also notice that the function $f$ in~\eqref{eq:f} is concave and piecewise quadratic, jointly in~$(\alpha,\beta)$. This observation allows us to apply the classical subgradient algorithm from the convex optimization literature~\citep[Section~3.2.3]{ref:nesterov2003introductory}.

\begin{proposition}[Dual program]\label{prop:DA} Consider the set of update rules defined as
\begin{align}\label{alg:dual}
    \left\{\begin{array}{l}
        \alpha_{t+1} =  (1 - \frac{1}{2}\kappa_t)\alpha_t -  \frac{1}{2}\eta\kappa_t \sqrt{\Lambda} V^\top \diag(z_t) \times \vspace{1mm}\\
         \qquad \qquad\qquad\qquad(c + V \sqrt{\Lambda} \alpha + A^\top \beta) \vspace{1mm}\\
         \beta_{t+1} =  \max\Big\{0, \beta_t -\kappa_t b - \frac{1}{2}\eta\kappa_t A \diag(z_t) \times \vspace{1mm}\\
         \qquad\qquad\qquad\qquad(c +  V \sqrt{\Lambda} \alpha + A^\top \beta)\Big\} \vspace{1mm}\\
         z_{t+1} = \ind_n\big(\mathcal{J}(\alpha_{t+1},\beta_{t+1})\big), 
    \end{array}\right. 
\end{align}
where $\{\kappa_t\}_t$ is the sequence of step sizes that satisfy the non-summable diminishing rule\footnote{ \label{footnote:stepsize} For instance, $\kappa_{t}=a/\sqrt{t}$ for a constant $a \in \R_+$.}
\[
\lim _{t \rightarrow \infty} \kappa_{t}=0, \quad \sum_{t=1}^{\infty} \kappa_{t}=\infty. 
\]
Then, algorithm~\eqref{alg:dual} converges to the optimal value~$d_k^*$ of problem~\eqref{eq:D_k}, i.e.,
\[ \d_k = \lim_{t \to \infty} f(\alpha_t,\beta_t).\]
Moreover, if the variable~$z_t$ also converges, then the convergent binary variable is the solution to program~\eqref{eq:M_k} and the duality gap between \eqref{eq:M_k} and \eqref{eq:D_k} is zero, i.e.,  $\d_k = \J_k$. 
\end{proposition}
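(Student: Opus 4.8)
The plan is to recognize the update rule~\eqref{alg:dual} as a projected (super)gradient ascent scheme for the concave maximization problem~\eqref{eq:D_k}, and then to invoke the classical convergence guarantee for the subgradient method under the stated stepsize rule. First I would establish that the inner value function~$f$ in~\eqref{eq:f} is concave and piecewise quadratic: since the feasible set for~$z$ is finite, $f$ is the pointwise minimum of finitely many maps $L(z,\cdot,\cdot)$, each of which is a smooth concave quadratic in $(\alpha,\beta)$ by inspection of~\eqref{eq:L} (its Hessian is a strictly negative-definite $-\tfrac12 I_k$ block in the $\alpha$-direction plus a negative-semidefinite coupling term coming from $-\tfrac{\eta}{4}(\cdot)^\top\diag(z)(\cdot)$). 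A pointwise minimum of concave functions is concave, which legitimizes the subgradient machinery of~\citep[Section~3.2.3]{ref:nesterov2003introductory}.

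Next I would verify that~\eqref{alg:dual} is exactly projected supergradient ascent on $f$ over $\R^k\times\R_+^m$. Differentiating gives $\nabla_\alpha L(z,\alpha,\beta) = -\tfrac12\alpha - \tfrac{\eta}{2}\sqrt{\Lambda}V^\top\diag(z)(c+V\sqrt{\Lambda}\alpha+A^\top\beta)$ and $\nabla_\beta L(z,\alpha,\beta) = -b - \tfrac{\eta}{2}A\diag(z)(c+V\sqrt{\Lambda}\alpha+A^\top\beta)$, so that the $\alpha$- and $\beta$-lines of~\eqref{alg:dual} are precisely the current iterate stepped along $(\nabla_\alpha L,\nabla_\beta L)$ evaluated at $(z_t,\alpha_t,\beta_t)$ with stepsize $\kappa_t$, followed by the Euclidean projection $\beta\mapsto\max\{0,\beta\}$ onto $\R_+^m$. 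The crucial structural point is that this direction is a genuine supergradient of $f$ at $(\alpha_t,\beta_t)$: by Lemma~\ref{lem:closed form}, $z_t=\ind_n(\mathcal J(\alpha_t,\beta_t))$ attains the inner minimum, so $L(z_t,\alpha_t,\beta_t)=f(\alpha_t,\beta_t)$, and since $f(\cdot)\le L(z_t,\cdot,\cdot)$ everywhere with equality at $(\alpha_t,\beta_t)$, concavity of $L(z_t,\cdot,\cdot)$ forces $\nabla_{(\alpha,\beta)}L(z_t,\alpha_t,\beta_t)\in\partial f(\alpha_t,\beta_t)$ (the concave superdifferential). With these two observations, the non-summable diminishing stepsize rule yields convergence of the objective value to the optimum $\d_k$ of~\eqref{eq:D_k}, which is the first claim.

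The hard part is not conceptual but technical: the standard theorem requires the supergradients to stay bounded along the trajectory, yet $f$ is only piecewise quadratic so its supergradients grow linearly and could a priori blow up. I would handle this by exploiting the strong concavity furnished by the $-\tfrac14\|\alpha\|_2^2$ term to bound the $\alpha$-iterates and the nonexpansiveness of the $\beta$-projection together with the coercivity of $-f$ to bound the $\beta$-iterates, thereby confining the trajectory to a compact set on which the supergradients are uniformly bounded; equivalently one assumes~\eqref{eq:D_k} is solvable with bounded sublevel sets. (A secondary subtlety is that the textbook guarantee controls the running-best value; passing to the last-iterate value stated in the proposition is where the piecewise-quadratic structure would be used.)

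For the second claim, suppose $z_t$ converges. Since $z_t$ lives in the finite set $\{0,1\}^n$, this means $z_t=z^\star$ for all sufficiently large $t$. From that index onward the $z$-dependence in~\eqref{alg:dual} is frozen, so the recursion reduces to projected gradient ascent on the single smooth concave quadratic $L(z^\star,\cdot,\cdot)$; any limit point $(\alpha^\star,\beta^\star)$ of the (bounded) iterates is then stationary for this projected ascent and hence, by concavity, a maximizer, giving $L(z^\star,\alpha,\beta)\le L(z^\star,\alpha^\star,\beta^\star)$ for all $(\alpha,\beta)\in\R^k\times\R_+^m$. Simultaneously, the consistency $z^\star=\ind_n(\mathcal J(\alpha^\star,\beta^\star))$ together with Lemma~\ref{lem:closed form} gives $L(z^\star,\alpha^\star,\beta^\star)\le L(z,\alpha^\star,\beta^\star)$ for all feasible $z$. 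These two inequalities state that $(z^\star,\alpha^\star,\beta^\star)$ is a saddle point of $L$, so $\J_k\le \max_{\alpha,\beta}L(z^\star,\alpha,\beta)=L(z^\star,\alpha^\star,\beta^\star)=\min_z L(z,\alpha^\star,\beta^\star)\le \d_k$; combined with weak duality $\d_k\le\J_k$ this forces $\d_k=\J_k=L(z^\star,\alpha^\star,\beta^\star)$, and the same saddle inequalities certify that $z^\star$ solves the outer minimization of~\eqref{eq:M_k}.
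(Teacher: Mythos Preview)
Your proposal is correct and follows essentially the same strategy as the paper: identify~\eqref{alg:dual} as projected supergradient ascent on the concave function~$f$, invoke the classical diminishing-stepsize convergence theorem for the first claim, and for the second claim freeze $z_t\equiv z^\star$ to reduce to ascent on the single concave quadratic $L(z^\star,\cdot,\cdot)$ before closing with weak duality. The only notable technical variation is in certifying bounded supergradients along the trajectory: the paper argues that for sufficiently small $\kappa_t$ the iterates become trapped in a level set of $f$ (on which $f$ is Lipschitz), whereas you propose to exploit the strong concavity furnished by $-\tfrac14\|\alpha\|_2^2$ together with the nonexpansiveness of the $\beta$-projection; either route suffices.
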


\begin{proof}
Using standard results in variational analysis~\cite{ref:rockafellar2009variational}, a subgradient of the function~$f(\alpha,\beta)$ can be computed as
\begin{align*}
    \frac{\partial f}{\partial \alpha} &= - \frac{1}{2} \alpha -  \frac{\eta}{2} \diag(\sqrt{\lambda}) V^\top \diag(z_{\alpha,\beta})\times\\& \qquad \qquad \qquad (c + V \diag(\sqrt{\lambda}) \alpha + A^\top \beta), \\
    \frac{\partial f}{\partial \beta} &= - b -\frac{\eta}{2} A \diag(z_{\alpha,\beta}) (c +  V \diag(\sqrt{\lambda}) \alpha + A^\top \beta),
\end{align*}
where $z_{\alpha,\beta} = \ind_n\big(\mathcal{J}(\alpha,\beta)\big)$ is the optimizer of the objective function~$L(.,\alpha,\beta)$; see also \eqref{eq:J-set}. The subgradient algorithm updates the dual variables $\alpha_t,\beta_t$ by the following rule
\begin{align*}
    \begin{bmatrix}\alpha_{t+1} \vspace{1mm}\\ \beta_{t+1} \end{bmatrix} = \begin{bmatrix}\alpha_{t} \vspace{1mm}\\ \beta_{t} \end{bmatrix} + \kappa_t \begin{bmatrix}\frac{\partial}{\partial \alpha}f(\alpha_t,\beta_t) \vspace{1mm}\\ \frac{\partial}{\partial \beta}f(\alpha_t,\beta_t)\end{bmatrix},
\end{align*}
where $\kappa_t$ is the learning rate or stepsize. The computational complexity of the subgradient algorithm is well-known for concave and Lipschitz-continuous objective functions~\cite{ref:boyd2003subgradient,ref:nesterov2003introductory}. In the remainder of the proof, we first verify that the classical results are applicable here too. To continue, let $\lambda = (\alpha,\beta)$ for short. Note that $z \in \{0, 1\}^n$ which bounds the number of quadratic regions of $f$ to $2^n$ which, in turn, implies that any level set of $f$ is bounded. Since $f$ is also concave and thus continuous, we conclude that $f$ is in fact Lipschitz-continuous in any fixed level set of $f$. At iteration $t$, say $f(\lambda_t) = f_{z_t}(\lambda_t) := L(z_t,\alpha_t,\beta_t)$. This also means $g_t = \nabla f_{z_t}(\lambda_t)$ is a subgradient of $f$ at $\lambda_t$. If $t$ is large enough, then step size $\kappa_t$ is small enough, and therefore the algorithm update $\lambda_{t+1} = \lambda_t + \kappa_t g_t$ increases the value of $f_{z_t}$. Hence, $f(\lambda_t) = f_{z_t} (\lambda_t) < f_{z_t}(\lambda_{t+1}) \le  f(\lambda_{t+1})$. Therefore, the subgradient algorithm remains afterwards within a level set of $f$, specified by $\{ \lambda: f(\lambda) \ge f(\lambda_t)\}$ when $t$ is sufficiently large. Now recall that $f$ is Lipschitz-continuous in any fixed level set of $f$. So, we can apply the result of \citet{ref:boyd2003subgradient} for all sufficiently large $t \in \mathbb N$. 

Concerning the second part of the assertion, suppose that the variable~$z_t$ also converges to a binary variable~$z\opt$. Since the feasible set of the variable~$z$ is finite, the convergence assumption effectively implies that for all sufficiently large~$t \in \mathbb N$, we have constant~$z_t = z\opt$ in~\eqref{alg:dual}. As such, the dual algorithm~\eqref{alg:dual} essentially reduces to 
\begin{align*}
      \begin{bmatrix}\alpha_{t+1} \vspace{1mm}\\ \beta_{t+1} \end{bmatrix} = \begin{bmatrix}\alpha_{t} \vspace{1mm}\\ \beta_{t} \end{bmatrix} + \kappa_t \begin{bmatrix}\frac{\partial}{\partial \alpha}L(z\opt,\alpha_t,\beta_t) \vspace{1mm}\\ \frac{\partial}{\partial \beta}L(z\opt,\alpha_t,\beta_t)\end{bmatrix}.
\end{align*}
The above argument allows us to also interpret the algorithm~\eqref{alg:dual} as the subgradient ascent algorithm for the quadratic concave mapping~$(\alpha,\beta) \mapsto L(z\opt,\alpha,\beta)$. This observation yields 
\begin{align}\label{eq:lim-max}
    \lim_{t \to \infty} L(z\opt,\alpha_t,\beta_t) = \max_{\alpha \in \R^k, \, \beta \in \R^m_+} L(z\opt, \alpha,\beta). \notag
\end{align}
Thanks to the above equality, one can inspect that
\begin{align*}
    \J_k \ge \d_k & = \lim_{t \to \infty} L(z\opt,\alpha_t,\beta_t) =  \max_{\substack{\alpha \in \R^k \\ \beta \in \R^m_+}} L(z\opt, \alpha,\beta) \ge \J_k,
\end{align*}
where the first inequality is due to the weak duality between programs~\eqref{eq:M_k} and \eqref{eq:D_k}, and the last inequality follows from the definition of the optimal value of~\eqref{eq:M_k}. Since both sides of the above inequalities are $\J_k$, all the middle terms coincide. Thus, it concludes that~$z\opt$ solves program~\eqref{eq:M_k} and the zero duality gap holds, i.e., $\J_k = \d_k$.
\end{proof}

Similar to the best response algorithm in Proposition~\ref{prop:BR}, we expect that in the long-run the duality algorithm~\eqref{alg:dual} exhibits a period behavior over a number of $z_t \in \{0,1\}^n$. In this light, one can also consider the coordinates of ones elements of $z_t$ as a safe screening suggestion (cf.~Remark~\ref{rem:safe}).

\begin{remark}[Computational complexity]
    Formulating $(\mathcal P_k)$ requires a PCA decomposition with a (crude) time complexity of $\mathcal{O}(n^3)$, and sorting for operation~\eqref{eq:J-set} takes $\mathcal O(n \log n)$. In addition, the best response algorithm in~\eqref{alg:BR} has a complexity of $\mathcal{O}(k^3 + nk)$. Thus, overall the best response method has a complexity of $\mathcal{O}(n^3 + t(k^3 + nk + n \log n))$, where $t$ are the number of iterations. The dual program algorithm~\eqref{alg:dual} requires algebraic operations with complexity $\mathcal{O}(nk)$, sand that the overall complexity of the dual program is $\mathcal{O}(n^3 + t(nk + n \log n))$.
\end{remark}

\subsection{Post-processing}
\label{sec:post}
Suppose, without any loss of generality, that either the best response algorithm~\eqref{alg:BR} or the dual program algorithm~\eqref{alg:dual} terminates after $T$ iterations. By collecting the incumbent solutions $z_{T-p}, \ldots, z_{T}$ in the variable $z$ over the last $p$ iterations,  we can form the unique indices 
\[
    Z = z_{T-p}~|~z_{T-p +1}~| ~\cdots~ | ~z_T \in \{0, 1\}^n,
\]
where $|$ represents the componentwise OR operator. Intuitively, the binary value of $Z_i$ indicates if at least one of the last $p$ incumbent solutions has the $i$-th element being non-zero. The vector $Z$ thus represents the indices of $x$ that are likely to be non-zero in the optimal solution of problem~\eqref{eq:P}. We now utilize the binary vector $Z$ as an input to resolve the reduced problem
\begin{equation} \label{eq:P_Z} \tag{$\mathcal P_{Z}$}
\begin{array}{cl}
    \min & {\langle c, x\rangle}+\langle x, Q x\rangle + \frac{1}{\eta}\|x\|_2^2\\
    \St & x\in\R^n,\\
    &A x \leq b,~ \|x\|_0 \leq s,~ |x| \le MZ,
\end{array}
\end{equation}
where $M$ is the big-$M$ constant. If $Z_i=0$, then the last constraint of~\eqref{eq:P_Z} implies that $x_i = 0$ and a preprocessing step can remove this redundant component in $x$. As a consequence, the effective dimension of the variable $x$ in~\eqref{eq:P_Z} is upper bounded by the number of non-zero elements in $Z$, which is essentially $\|Z\|_0$. It is likely that $Z$ has many elements that are 0, thus $\|Z\|_0 \ll n$ and problem~\eqref{eq:P_Z} is easier to solve compared to~\eqref{eq:P}. In general, $\| Z\|_0 > s$, so \eqref{eq:P_Z} remains a binary quadratic optimization problem. In the optimistic case when $\| Z \|_0 = s$, then the cardinality constraint $\|x\|_0 \le s$ becomes redundant and \eqref{eq:P_Z} reduces to a quadratic program.


In practice, the magnitude of $M$ may affect the run time, and a tight value of $M$ can significantly improve the numerical stability and reduce the solution time. We follow the suggestion from~\citet{ref:hazimeh2020sparse} to compute $M$ as follows. Given the terminal solution $z_T$ from either the best response algorithm~\eqref{alg:BR} or the dual program algorithm~\eqref{alg:dual}, we solve problem~\eqref{eq:P_Z} with the input $Z$ being replaced by $z_T$ to get $x_T$. As $z_T$ satisfies~$\|z_T\|_0 = s$, this problem reduces to a quadratic program. To ensure that the big-$M$ formulation adds no binding constraints we assign $M = 4 \|x_T\|_{\infty}$. 

Lastly, we emphasize that the solution $z_t$ in the subgradient ascent algorithm~\eqref{alg:dual} does not fluctuate significantly from one iteration to another. Thus, for the dual program approach, we need to set a periodic value $p$ which is sufficiently large in order to recuperate meaningful signals on the indices. The best response method using the update~\eqref{alg:BR}, on the contrary, requires a smaller number of period $p$. 

\section{Numerical Experiments}
\label{sec:numerical}
We benchmark different approaches to solve problem \eqref{eq:P} in the sparse linear regression setting. All experiments are run on a laptop with Intel(R) Core(TM) i7-8750 CPU and 16GB RAM using MATLAB 2020b. The optimization problems are modeled using YALMIP~\cite{ref:lofberg2004yalmip}, as the interface for the mixed-integer solver \cite{mosek}. Codes are available at: \url{https://github.com/RVreugdenhil/sparseQP}


We compare our algorithms against four state-of-the-art approaches. They include two screening methods: the safe \texttt{screening} method of \citet{ref:atamturk2020safe} and the \texttt{warm start} method in~\citet{ref:bertsimas2017sparse}, and two direct optimization approaches: the Algorithm 7 in \citet{ref:beck2015on} (denoted \texttt{BH Alg 7}) and the method of \citet{ref:ganzhao2020block} (denoted \texttt{KDD}).\footnote{Available at \url{https://yuangzh.github.io/}}
The best response alternation in Section~\ref{sec:br} is referred to as \texttt{BR}, and the dual program approach in Section~\ref{sec:subgradient} is referred to as \texttt{DP}.



\subsection{Empirical results for synthetic data}\label{sec:syn}
We generate the covariate $\omega \in \R^n$ and the univariate response $\xi \in \R$ using the 
linear model
\[
\xi = x_{\mathrm{true}}^\top \omega + \epsilon
\]
following the similar setup in~\citet{ref:bertsimas2017sparse}. The unobserved true vector
$x_{\mathrm{true}} \in \R^n$ has $s$-nonzero components at indices selected uniformly at random, without replacement.
The nonzero components in $x_{\mathrm{true}}$ are selected uniformly at random from $\{\pm 1\}$. Moreover, the covariate $\omega$ is 
independently generated from a Gaussian distribution $\mc N(0,\Sigma)$, where $\Sigma$ is parametrized by the correlation coefficient $\rho$ as $\Sigma_{i,j} \Let \rho^{|i-j|}$ for all $i,  j \in [n]$ and $0 \le \rho \le 1$. The noise $\epsilon$ is independently generated from a normal distribution $\mc N(0,\sigma^2)$ with
\[
    \sigma^2 = \frac{\text{var}(x_{\mathrm{true}}^\top \omega)}{\text{SNR}}
     = \frac{x_{\rm true}^\top \Sigma x_{\rm true}}{\text{SNR}},
\]
where SNR is a chosen signal-to-noise ratio \cite{ref:xie2020scalable}. 

To avoid a complicated terminating criterion, we run the \texttt{BR} method for $T_{\texttt{BR}}= 20$ iterations, and we run the \texttt{DP} method for $T_{\texttt{DP}} = 500$ iterations. We empirically observe that \texttt{BR} converges by $T_{\texttt{BR}}=20$ on the synthetic data. The stepsize constant for \texttt{DP} (see Footnote~\ref{footnote:stepsize}) is set to $a = 4 \times 10^{-3}$. 
Regarding the post-processing step in Section~\ref{sec:post}, we fix $p_{\texttt{BR}}= 6$ and $p_{\texttt{DP}} = 50$ as the number of terminating solutions that are used to estimate the support $Z$. For the experiment on the synthetic data, the big-$M$ constant is set to $4$, because $\|x_{\mathrm{true}}\|_{\infty} = 1$.

Our first experiment studies the impact of the regularization parameter $\eta$ on the performance of \texttt{BR} and \texttt{DP} in terms of safe screening. To this end, we fix $N = 1000, n = 1000, s = 10$, $\rho = 0.5$ and $\text{SNR}= 6$ to generate the data. 

The screening capacity of \texttt{BR} and \texttt{DP} is measured by the sparsity of the input parameter $Z$ in problem~\eqref{eq:P_Z}, this reduced size is measured by $\| Z\|_0$. A similar quantity can be computed for \texttt{screening}. We choose the dimension of subspace $k=400$ to ensure that our the principal component approximation generates a good quality solution to the original problem. Table~\ref{tab:syneta} reports the screening capacity and we can observe that \texttt{screening} effectively reduces the dimension for small values of $\eta$, which is in agreement with the empirical results reported in~\citet{ref:atamturk2020safe}. However, \texttt{screening} performs less convincingly for $\eta \ge 1$. Our methods \texttt{BR} and \texttt{DP} perform more consistently over the whole range of $\eta$: they can reduce \eqref{eq:P_Z} to a quadratic program for 59\% and 81\% of all instances respectively.

\begin{table}[h!]
\caption{Effective problem size measured by $\|Z\|_0$ for varying $\eta$, averaged over 25 replications. Lower is better. Values are rounded to nearest integer, asterisks denote that ~$\|Z\|_0\!\!=\!\!s$ on all instances.}
\label{tab:syneta}
\begin{tabular}{|l|l|l|l|}
\hline
               & \texttt{DP} $k=400$& \texttt{BR} $k=400$& \texttt{screening} \!\!\!     \\ \hline
$\eta =   10^2$ & 69    & 20      & 1000  \\ \hline
$\eta = 10$    & 10$^\star$      & 20    & 1000 \\ \hline
$\eta = 1$     & 10      & 10      & 809   \\ \hline
$\eta = 10^{-1}$   & 10$^\star$      & 10      & 463 \\ \hline
$\eta =10^{-2}$   & 10$^\star$      & 10$^\star$      & 45 \\ \hline
$\eta =10^{-3}$  & 10$^\star$      & 10$^\star$      & 10  \\ \hline
\end{tabular}
\end{table}

We also study the performance of our methods in a setting with $\rho \ge 0.7$. We measure the quality of the estimator $x$ by the mean squared error on the data
\[
    \text{MSE} = \displaystyle \frac{1}{N} \sum_{i=1}^N \|\xi_i - x^\top \omega_i\|_2^2.
\]
We fix the regularisation term $\eta = 10$. Setting $\eta$ to a lower value would cause unwanted shrinkage of the estimator $x$, which increases the MSE: instances where $\eta=0.1$ reported a MSE for all methods of at least 5 times larger than that of the MSE using $\eta = 10$. As seen in Table~\ref{tab:syneta} for this specific $\eta$ the \texttt{screening} method does not reduce the problem size, so we compare to the MSE generated by \texttt{warm start}.    

We observe in Table~\ref{tab:synrho} that \texttt{DP} outperforms the other methods in terms of MSE with highly correlative data ($\rho \ge 0.8$). A possible explanation for why \texttt{DP} outperforms the \texttt{warm start} is that the eigenvalue decomposition can convert highly correlative features to independent principal components \cite{ref:liu2003principal}.  
\begin{table}[h!]
\caption{MSE over different $\rho$ averaged over 25 independent replications. Lower is better. }
\label{tab:synrho}
\begin{tabular}{|l|L{9 mm}|L{9 mm}|L{9 mm}|L{9 mm}|L{9 mm}|}
\hline
                  & \texttt{DP} $k=400$                    & \texttt{BR} $k=400$                    & \texttt{warm start}& \texttt{BH Alg 7}& \texttt{KDD}\!\!\\ \hline
                  $\rho =   0.7$ & 1.829  & 1.829   & 1.829 & 1.897 & 1.829                          \\ \hline
$\rho=0.8$ & 1.863    & 1.969    & 1.988 & 2.114 & 1.866                        \\ \hline
$\rho=0.9$ & 1.895    & 3.666    & 3.714 & 2.570 & 1.984                         \\ \hline
\end{tabular}
\end{table}

\subsection{Empirical results for real data}\label{sec:real}
We benchmark different methods using real data sets~\cite{ref:dua2017uci}; the details are listed in Table~\ref{tab:realdata}. We preprocess the data by normalizing each covariate and target response independently to values in the range $[0, 1]$. For each independent replication, we randomly sample 70\% of the data as training data and 30\% as test data. The training set and the test set have $N_{\rm train}$ and $N_{\rm test}$ samples, respectively. 

\begin{table}[h!]
\caption{List of UCI datasets used for experiments alongside their feature size $n$, total sample size $\bar N$ and the dimension of subspace $\hat k$, specified earlier in Figure~\ref{fig:Frobdecay}.
}
\label{tab:realdata}
\centering
\begin{tabular}{|l|rrr|}
\hline Name & $n$ & $\bar N$ & $\hat{k}$ \\
\hline 
Facebook (FB) & 52 & 199,030 & 14\\
OnlineNews (CR) & 58 & 39,644 & 20 \\
SuperConductivity (SC)& 81 & 21,263 & 9 \\
Crime (CR) & 100 & 1,994 & 53\\
UJIndoor (UJ)& 465 & 19,937 & 49\\ 
\hline
\end{tabular}
\end{table}

\begin{table*}[t!]
\scriptsize
\centering
\captionsetup{justification=centering}
\caption{In-sample MSE on real datasets, averaged over 50 independent train-test splits. Lowest error for each case is highlighted in grey.}
\label{tab:realinsample}
\begin{tabular}{|l|l|l|l|l|l|l|l|l|}
\hline &&&&&&\\[-1em]
     & \texttt{DP} $k=40$ & \texttt{DP} $k=\hat{k}$ & \texttt{BR} $k=40$ & \texttt{BR} $k=\hat{k}$ & \texttt{warm start} & \texttt{screening} & \texttt{BH Alg 7}& \texttt{KDD} \\ \hline
(FB) & 3.039$\times 10^{-4}$                            & 3.040$\times 10^{-4}$                                 & 3.036$\times 10^{-4}$                            & \cellcolor[HTML]{EFEFEF}\textbf{3.034}$\boldsymbol{\times 10^{-4}}$         & out of memory                        & 3.036$\times 10^{-4}$  & 3.220$\times 10^{-4}$                            & 3.436$\times 10^{-4}$                           \\ \hline
(ON) & \cellcolor[HTML]{EFEFEF}\textbf{1.912}$\boldsymbol{\times 10^{-4}}$    & 1.913$\times 10^{-4}$                                 & 1.914$\times 10^{-4}$                            & 1.914$\times 10^{-4}$                                 & 1.914$\times 10^{-4}$                             & 1.914$\times 10^{-4}$ & 1.921$\times 10^{-4}$                            & 1.922$\times 10^{-4}$                            \\ \hline
    (SC) & 1.262$\times 10^{-2}$                            & 1.263$\times 10^{-2}$                                 & 1.396$\times 10^{-2}$                            & 1.368$\times 10^{-2}$                               & 1.326$\times 10^{-2}$                             & \cellcolor[HTML]{EFEFEF}\textbf{1.256}$\boldsymbol{\times 10^{-2}}$ & 1.455$\times 10^{-2}$                            & 1.470$\times 10^{-2}$    \\ \hline
(CR) & 2.775$\times 10^{-2}$                            & 2.775$\times 10^{-2}$                                 & 2.807$\times 10^{-2}$                            & 2.801$\times 10^{-2}$                                 & 2.800$\times 10^{-2}$                             & \cellcolor[HTML]{EFEFEF}\textbf{2.760}$\boldsymbol{\times 10^{-2}}$ & 3.072$\times 10^{-2}$                            & 3.063$\times 10^{-2}$    \\ \hline
(UJ) & \cellcolor[HTML]{EFEFEF}\textbf{2.118}$\boldsymbol{\times 10^{-2}}$    & 2.294$\times 10^{-2}$                                 & 2.673$\times 10^{-2}$                            & 2.678$\times 10^{-2}$                                 & 2.440$\times 10^{-2}$                             & 2.267$\times 10^{-2}$  & 3.804$\times 10^{-2}$                             & 3.066$\times 10^{-2}$                          \\ \hline
\end{tabular}
\end{table*}
The number of iterations for the \texttt{BR} and \texttt{DP} are chosen to be $T_{\texttt{BR}}=40$ and $T_{\texttt{DP}}=5000$, respectively. The number of terminating solutions that are used to estimate the support $Z$ in the postprocessing step is fixed to $p_{\texttt{BR}}= 10$ and $p_{\texttt{DP}} = 100$ for the two methods. The stepsize constant (see Footnote~\ref{footnote:stepsize}) is set $a = 2 \times 10^{-3}$. Moreover, we fix the big-$M$ constant using the procedure described in Section~\ref{sec:post}.

For the real datasets, the number of samples $\bar N$ is sufficiently bigger than the dimension $n$, thus we set the ridge regularization parameter to $\eta ={\sqrt{N_{\rm train}}}$ so that the effect of regularization diminishes as $\bar N$ increases. We choose the sparsity level $s=10$, similar to Section~\ref{sec:syn}, and set the time limit for MOSEK to 300 seconds.

\begin{figure}[h!]
\centering
\begin{subfigure}[b]{0.5\textwidth}
   \includegraphics[width=1\linewidth+0.1cm, trim={0.8cm 10cm 0  10cm},clip]{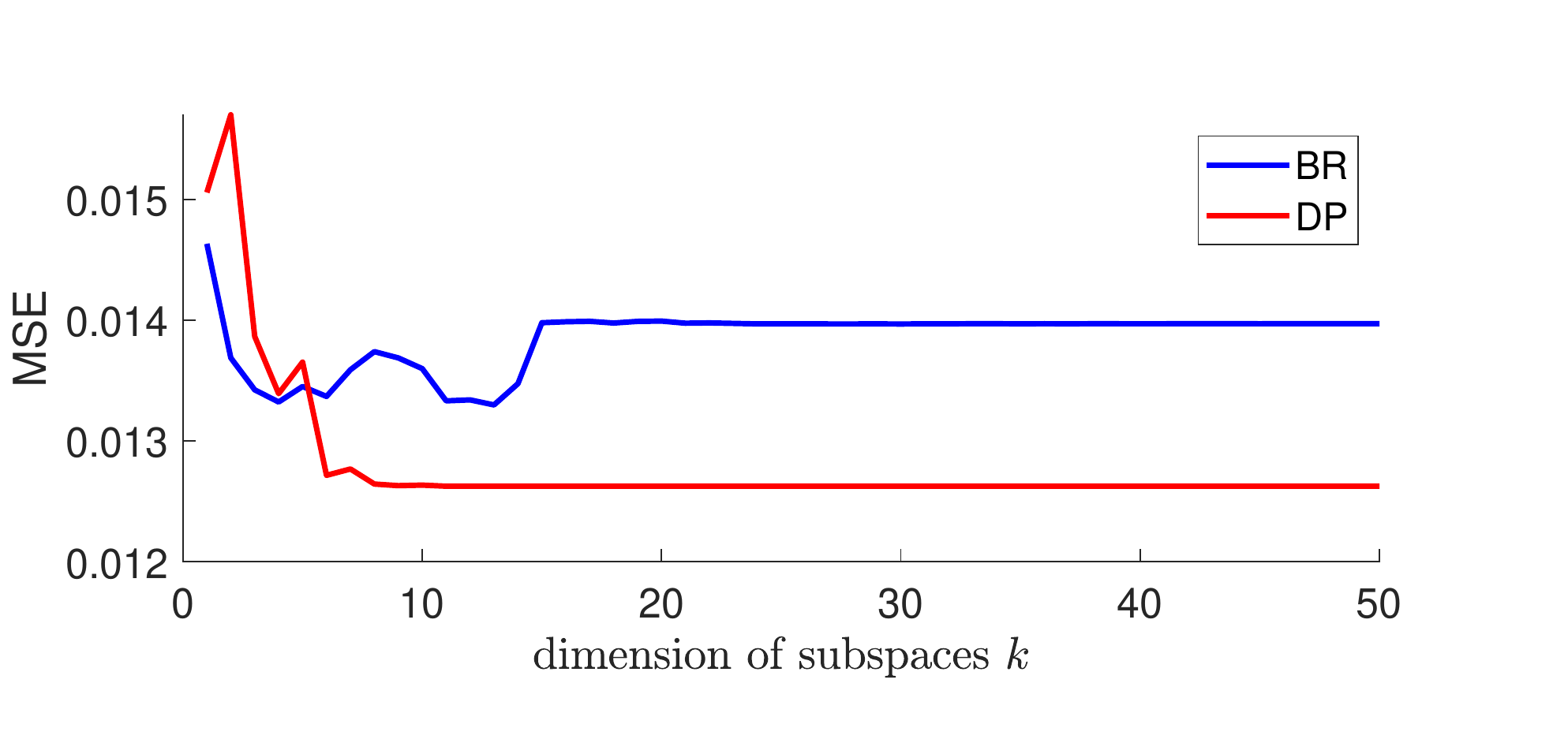}
   \vspace{-8mm}
   \caption{in-sample Mean Squared Error}
   \label{fig:kmse} 
\end{subfigure}
\vspace{-3mm}
\begin{subfigure}[b]{0.5\textwidth}
   \includegraphics[width=1\linewidth+0.1cm, trim={0.8cm 10cm 0  10cm},clip]{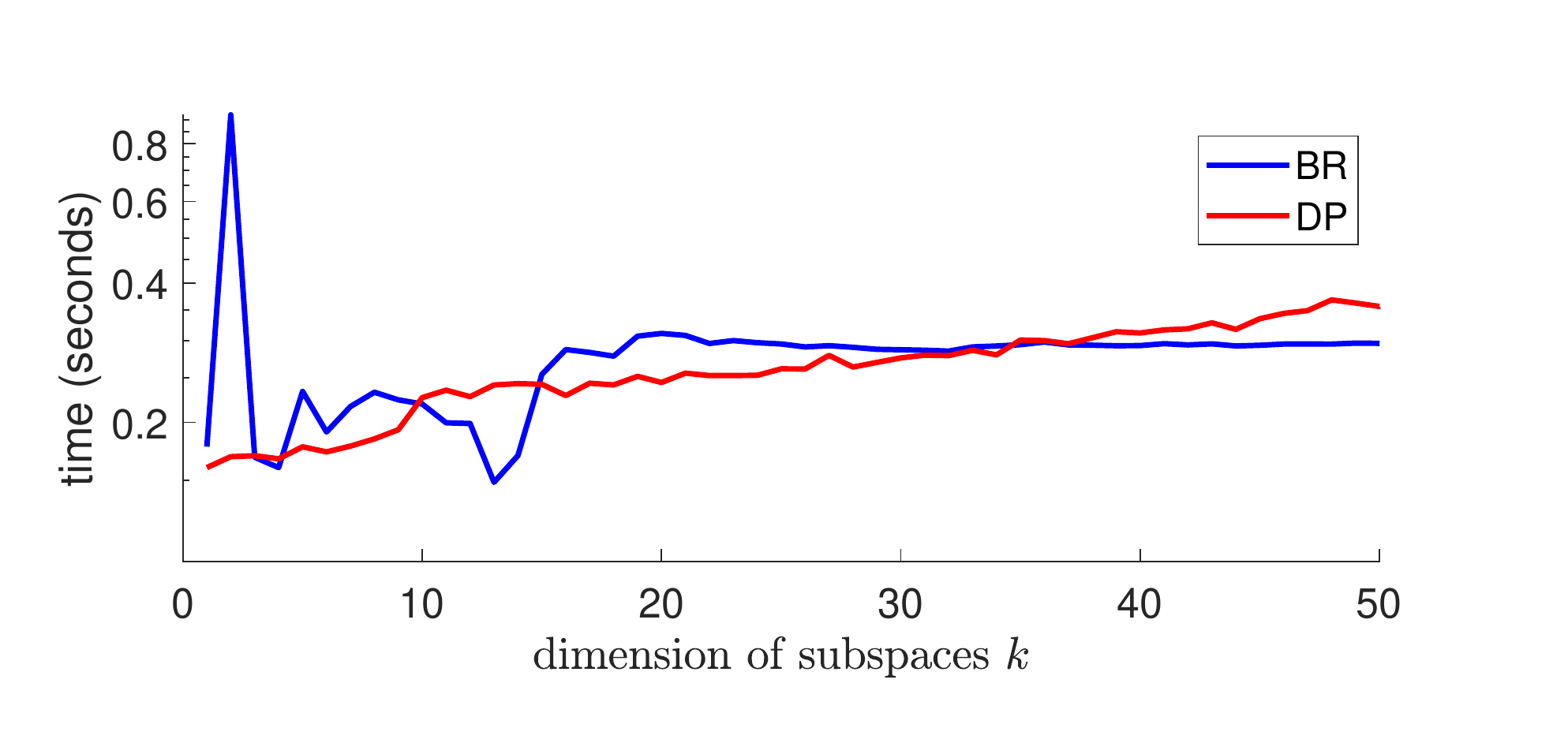}
   \vspace{-5mm}
   \caption{computational time}
   \label{fig:ktime}
\end{subfigure}
\caption{Effects of the dimension of subspaces $k$ on performance of \texttt{BR} and \texttt{DP} on the Superconductivity dataset. Results are averaged over 50 independent train-test splits.}
\label{fig:diffk}
\end{figure}

Figure~\ref{fig:diffk} illustrates that the MSE of \texttt{DP} monotonically decreases with the dimension of subspace for $k < 8$ and plateaus for $k\ge 8$.
The MSE of \texttt{BR} is non monotonic and converges when $k \ge 15$ 
even though the minimum is achieved at $k=13$. We define $k\opt$ as the (minimal) dimension of subspace $k$ corresponding to the lowest MSE. For the (SC) dataset, $k_{\texttt{BR}}\opt =13$ and $k_{\texttt{DP}}\opt =15$. Coincidentally, we observe that $k\opt$ is close to the value $\hat k$ reported in~Table~\ref{tab:realdata}. This observation also persists empirically for the other datasets.



Table~\ref{tab:realinsample} shows that \texttt{DP} delivers a lower in-sample MSE than \texttt{BR} in $4$ out of $5$ datasets, and \texttt{DP} also has a lower in-sample MSE than the \texttt{warm start}, \texttt{BH Alg 7} and \texttt{KDD} for all datasets. The \texttt{warm start} method runs out of memory for the (FB) dataset because it requires storing and computing based on a kernel matrix $K = [\omega_i^\top \omega_j]_{i,j}$ of dimension $N\times N$. This is in stark contrast to our proposed approach that computes only a matrix $Q$ of dimension $n\times n$, and then 
further 
reduce the computational burden by truncating the SVD of $Q$.
The memory usage of our method is hence not sensitive to the number of samples $N$. 

The \texttt{screening} method outperforms on the (SC) and (CR) dataset, however a careful examination of Table~\ref{tab:realred} shows that \texttt{screening} does minimal reduction effects for these two datasets. The result of \texttt{screening} in Table~\ref{tab:realinsample} on the (SC) and (CR) datasets is essentially the results obtained by applying the MOSEK solver to the original problem (reaching a time limit of 300 seconds). 

Table~\ref{tab:realred} also shows that our \texttt{DP} and \texttt{BR} methods can effectively reduce the number of effective variables. Our methods deliver a solution $x\opt$ in around 1 second for all datasets, including the time spent on computing the eigendecomposition of $Q$ and the solution time for solving~\eqref{eq:P_Z} using MOSEK.

\begin{table}[h]
\caption{Reduced problem size over different data rounded average over 50 independent train-test splits. Lower is better. }
\label{tab:realred}
\begin{tabular}{|l|l|l|l|}
\hline
                  & \texttt{DP} $k=\hat{k}$ & \texttt{BR} $k=\hat{k}$ & \texttt{screening}         \\ \hline

(FB)            & 11      & 20      & 49 \\ \hline
(ON)            & 15      & 20      & 57  \\ \hline
(SC)            & 11      & 20      & 77 \\ \hline
(CR)            & 12      & 20      & 100                        \\ \hline
(UJ)            & 16      & 20      & 465                       \\ \hline
\end{tabular}
\end{table}

\begin{remark}[Choice between \texttt{DP} and \texttt{BR}]
We have no theoretical or consistent numerical justification in favor of one of the proposed algorithms~\texttt{DP} or \texttt{BR}. However, Algorithm~\texttt{BR} in Proposition~\ref{prop:BR} typically converges faster (Fig. A in supplementary) while Algorithm~\texttt{DP} offers a better solution (Fig.~\ref{fig:kmse} and Tables~\ref{tab:synrho},\,\ref{tab:realred}). We thus suggest \texttt{DP} and \texttt{BR} as complementary approaches to solve the problem.
\end{remark}

\section{Acknowledgement}
We would like to thank the meta-reviewer and four anonymous reviewers for their constructive comments that helped improve the presentation of this paper. This research is partially supported by the ERC grant TRUST-949796.

\bibliography{main_and_supp.bbl}
\bibliographystyle{icml2021}

\appendix

\clearpage
\onecolumn

\section{Additional Numerical Experiments}
\subsection{Comparison of Computational Time to \texttt{warm start}}
We study the impact of the sample size $N$ on the recovery quality of the solution. We fix $n = 1000$, $s = 10$, $\rho = 0.5$, $\text{SNR}= 6$ and $\eta = 10$. 
We showcase the computational time of our methods and of the \texttt{warm start} in Figure~\ref{fig:syntime}, the computational time is defined as the time needed to generate $x\opt$. 
Note that the \texttt{BR} method uses MOSEK to obtain the solution to $x\opt$ because it does not converge to a single set $z$ for $\eta = 10$, so the solver time is also included in the computational time. We run the \texttt{BR} method for $T_{\texttt{BR}}= 20$ iterations, and we run the \texttt{DP} method for $T_{\texttt{DP}}= 500$ iterations.

\begin{figure}[h!]
\centering
   \includegraphics[width=12cm, trim={1cm 10cm 0  10cm},clip]{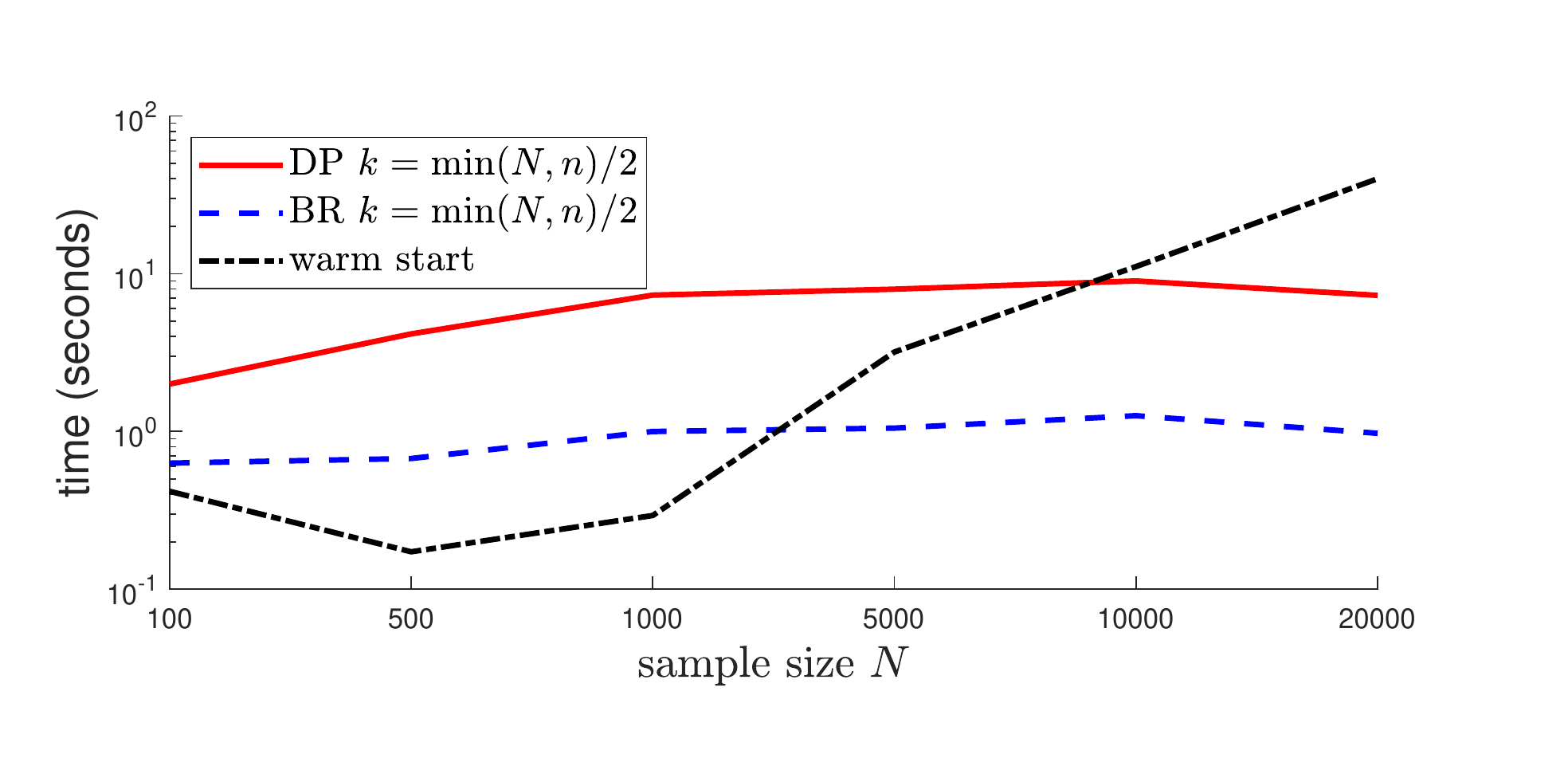}
   \caption{Computational time over different sample sizes averaged over 25 replications}
\label{fig:syntime} 
\end{figure}

We observe that the computational time of the \texttt{DP} method increases monotonically with the sample size $N$. Note that $T_{\texttt{BR}} \ll T_{\texttt{DP}}$ so calculating $Z_{\texttt{BR}}$ requires less time than $Z_{\texttt{DP}}$. We observe that when $N = 100$ the \texttt{BR} and the \texttt{warm start} have a higher computational time than for $N=500$. For the \texttt{BR}, this is 
because the number of non-zero elements in $Z$ (i.e., $\|Z\|_0$) is larger for $N=100$ than for $N=500$, hence MOSEK takes more time for $N=100$. The MSE of all methods is similar when $N \ge 500$, when $N = 100$ the MSE of all methods differs significantly at every instance. This is also observed by \cite{ref:bertsimas2017sparse}, which states that the computational time and MSE deteriorate as $N$ gets smaller relative to $n$.

We observe that \texttt{BR} and \texttt{DP} perform particularly well in terms of computational time in ranges where $N > n$ compared to the \texttt{warm start}. The running time of our method is less susceptive to the number of samples $N$. This is in stark contrast to the \texttt{warm start}, in which the kernel matrix of dimension $N$-by-$N$ is stored. 


\subsection{Comparison for Different SNR and $s$}
We extend the comparison made in the paper for different values of SNR and $s$. 
\begin{table}[h!]
\centering
\caption{MSE over different SNR averaged over 25 independent replications. Lower is better.}
\label{tab:synSNR}
\begin{tabular}{|l|l|l|l|l|l|}
\hline
           & \texttt{DP}   $k=400$ & \texttt{BR}   $k=400$ & \texttt{warm   start} & \texttt{Beck Alg 7}& \texttt{KDD}\\ \hline
$\text{SNR} =20$    & 0.588                                & 0.588                                & 0.588& 0.588& 0.588                                  \\ \hline
$\text{SNR} = 6$    & 1.767                               & 1.767                                & 1.767& 1.767& 1.767                                  \\ \hline
$\text{SNR} = 3$    & 3.452                                & 3.452                                & 3.452& 3.452& 3.452                                 \\ \hline
$\text{SNR} = 1$    & 10.190                               & 10.190                               & 10.190 & 10.198 & 10.205                                   \\ \hline
$\text{SNR} = 0.05$ & 194.592                              & 194.561                              & 194.560& 194.756& 199.928                               \\ \hline
\end{tabular}
\end{table}
\begin{table}[h!]
\centering
\caption{MSE over different $s$ averaged over 25 independent replications. Lower is better.}
\label{tab:syns}
\begin{tabular}{|l|l|l|l|l|l|}
\hline
       & \texttt{DP}   $k=400$ & \texttt{BR}   $k=400$ & \texttt{warm   start}& \texttt{Beck Alg 7}& \texttt{KDD} \\ \hline
$s= 5$   & 0.887                                & 0.887                               & 0.887 & 0.887 & 0.887                                 \\ \hline
$s = 10$ & 1.767                               & 1.767                                & 1.767& 1.767& 1.767                                  \\ \hline
$s = 20$ & 3.435                                & 3.435                                & 3.435 & 3.557 & 3.450                                  \\ \hline
$s = 30$ & 5.050                                & 5.050                                & 5.058  & 5.888 & 5.440                                  \\ \hline
$s = 40$ & 6.919                                & 6.928                                & 6.918 & 8.290 & 8.560                                \\ \hline 
\end{tabular}
\end{table}
In Table~\ref{tab:synSNR} and~\ref{tab:syns} we observe that the MSE over different SNR and $s$ is very similar for all methods.
This is due to the fact that all methods find a similar support $z\opt$. Using this support all problems solve the same convex quadratic programming problem. We also observe that the reduced size $\|Z_{\texttt{BR}}\|_0 \approx 2s$ and $\|Z_{\texttt{DP}}\|_0 \approx s$. So as the problem in~\eqref{eq:P_Z} increases with $s$, MOSEK takes more time to solve~\eqref{eq:P_Z} and because $\|Z_{\texttt{BR}}\|_0 > \|Z_{\texttt{DP}}\|_0$ the \texttt{DP} is faster for large $s$.

\subsection{Real Datasets}
For the real datasets listed in the main paper, we present the out-sample MSE for the different methods in Table~\ref{tab:realoutsample}.
\begin{table}[h!]
\scriptsize
\centering
\captionsetup{justification=centering}
\caption{Out-sample MSE on real datasets, averaged over 50 independent train-test splits. Lowest error for each dataset is highlighted in grey.}
\label{tab:realoutsample}
\begin{tabular}{|l|l|l|l|l|l|l|l|l|}
\hline &&&&&&&&\\[-1em]
                  & \texttt{DP}   $k=40$ & \texttt{DP} $k=\hat{k}$ & \texttt{BR}   $k=40$ & \texttt{BR} $k=\hat{k}$ & \texttt{warm   start} & \texttt{screening}& \texttt{BH Alg 7} & \texttt{KDD} \\ \hline
(FB)          &$3.026\times 10^{-4}$                             & $3.025\times 10^{-4}$                             & $3.022\times 10^{-4}$                             & $\boldsymbol{3.020\times 10^{-4}}$    \cellcolor[HTML]{EFEFEF}                         &   out of memory                                       & $3.022\times 10^{-4}$ & $3.203\times 10^{-4}$ & $3.409\times 10^{-4}$                            \\ \hline
(ON)       & $\boldsymbol{1.796\times 10^{-4}}$ \cellcolor[HTML]{EFEFEF}                             &$ 1.797\times 10^{-4}$                             & $1.797\times 10^{-4}$                             & $1.797\times 10^{-4}$                             & $1.797\times 10^{-4}$                               & $1.797\times 10^{-4}$ & $1.803\times 10^{-4}$  & $1.803\times 10^{-4}$                            \\ \hline
(SC) & $1.263\times 10^{-2}$                             & $1.263\times 10^{-2}$                             & $1.398\times 10^{-2}$                             & $1.370\times 10^{-2}$                             & $1.326\times 10^{-2}$                               &  $\boldsymbol{1.257\times 10^{-2}}$   \cellcolor[HTML]{EFEFEF}  & $1.454\times 10^{-2}$   & $1.473\times 10^{-2}$                          \\ \hline
(CR)             & $2.892\times 10^{-2}$                             & $2.891\times 10^{-2}$                             & $2.893\times 10^{-2}$                             & $2.894\times 10^{-2}$                             & $2.900\times 10^{-2}$                               &  $\boldsymbol{2.868\times 10^{-2}}$  \cellcolor[HTML]{EFEFEF}   & $3.103\times 10^{-2}$      & $3.148\times 10^{-2}$                        \\ \hline
(UJ)               & $\boldsymbol{2.149\times 10^{-2}}$ \cellcolor[HTML]{EFEFEF}                         & $2.324\times 10^{-2}$                             & $2.684\times 10^{-2}$                             & $2.691\times 10^{-2}$                             & $2.468\times 10^{-2}$                               & $2.291\times 10^{-2} $        & $3.848\times 10^{-2} $        & $3.080\times 10^{-2} $                           \\ \hline
\end{tabular}
\end{table}

Similar to the in-sample MSE, Table~\ref{tab:realoutsample} shows that \texttt{DP} delivers a lower out-sample MSE than \texttt{BR} in $4$ out of $5$ datasets, and \texttt{DP} also has a lower out-sample MSE than the \texttt{warm start}, \texttt{BH Alg 7} and \texttt{KDD} for all datasets. The \texttt{screening} method outperforms the \texttt{DP} on the (SC) and (CR) dataset, however as explained in the main paper for $\eta = \sqrt{N_{train}}$ the result of \texttt{screening} in Table~\ref{tab:realoutsample} on the (SC) and (CR) datasets is essentially the results obtained by applying the MOSEK solver to the original problem (reaching a time limit of 300 seconds). 

\section{Proof of Proposition~3.1}
We provide the proof of Proposition~3.1, which is not included in the main paper.
\begin{proof}
Using the big-$M$ equivalent formulation, we have
\[
     \begin{array}{rcll}
\mathcal{J}_k^{\opt} = 
\min\limits_{\substack{z \in \{0, 1\}^n \\ \sum z_j \le s}}& \min & \ds \sum_{i=1} ^{k} \lambda_{i}  y_{i}^{2} + \langle c, x\rangle + \eta^{-1} \|x\|_2^2 \\
    &\St & x \in\mathbb{R}^n,~y \in \R^k \\ 
    && \sqrt{\lambda_{i} } y_{i}=\sqrt{\lambda_{i} }\left\langle v_{i}, x\right\rangle & i \in [k] \\
     && | x_j | \le M z_j & j \in [n] \\
     && A x \leq b.
\end{array}
\]
Fix a feasible solution for $z$ and consider the inner minimization problem. By associating the first two constraints with the dual variables $\alpha$ and $\beta$, the Lagrangian function is defined as
\begin{align*}
    \mc L(x, y, \alpha, \beta) &= \sum_{i=1} ^{k} \lambda_{i}  y_{i}^{2} + \langle c, x\rangle + \eta^{-1} \|x\|_2^2 +
     \sum_{i=1} ^{k} \alpha_{i} \sqrt{\lambda_{i} }\left(\left\langle v_{i}, x\right\rangle-y_{i}\right) +  \beta^\top (Ax - b) \\
    &= - \beta^\top b +  y^\top \Lambda y - \alpha^\top \sqrt{\Lambda} y 
     + \Big\langle c+ V \sqrt{\Lambda} \alpha + A^\top \beta, x\Big\rangle + \eta^{-1} \|x\|_2^2,
\end{align*}
in which $\Lambda = \diag\{\lambda_1, \cdots,\lambda_k\}$.
For any feasible solution $z$, the inner minimization problem is a convex quadratic optimization problem and we have
\[
    \mathcal{J}_k^{\opt} = \min\limits_{\substack{z \in \{0, 1\}^n \\ \sum z_j \le s}}~
    \max\limits_{\substack{ \alpha \in \mathbb{R} ^{k} \\\beta \in \R_+^m}}~L(z, \alpha, \beta),
\]
where the objective function $L$ is defined as
\begin{align*}
L(z, \alpha, \beta) := -\beta^\top b + \min_{y \in \R^k}~y^\top \Lambda y - \alpha^\top \sqrt{\Lambda} y
+\min_{\substack{x \in\mathbb{R}^n\\ |x_j| \le M z_j ~\forall j}}\Big\langle c+ V \sqrt{\Lambda} \alpha + A^\top \beta, x\Big\rangle + \eta^{-1} \|x\|_2^2.
\end{align*} 
We will reformulate the two optimization subproblems in the definition of $L$.
For any feasible 
pair~$\beta \in \R_+^m$ and $\alpha \in \R^k$, the subproblem over $y$ is an unconstrained convex quadratic optimization problem. The corresponding optimal solution for $y$ is
\[
    y\opt(\alpha, \beta) = \frac{1}{2}  (\sqrt{\Lambda})^{-1} \alpha.
\]
Consequently, the optimal value of the $y$-subproblem is given by
\[
\min_{ y \in \R^k} ~y^\top \Lambda y - \alpha^\top \sqrt{\Lambda} y = -\frac{1}{4} \|\alpha\|_2^2.
\]
Next, consider the $x$-subproblem. Let $\gamma := c+ V \sqrt{\Lambda} \alpha+A^\top \beta$ and let $\gamma_j$ denote  the $j$-th element of $\gamma$. The big-$M$ equivalent formulation for the $x$-subproblem admits the form
\begin{align}
    \min_{\substack{x \in \R^n \\ |x_j| \le M z_j~\forall j}}~\sum_{j=1}^n \gamma_j x_j + \frac{x_j^2}{\eta}  = \sum_{j=1}^n \min_{\substack{x \in \R^n \\ |x_j| \le M z_j~\forall j}}~ \gamma_j x_j + \frac{x_j^2}{\eta}  = &\sum_{j=1}^n - \frac{\eta}{4} \gamma_j^2 z_j, \notag
\end{align}
where the last equality exploits the fact that the optimal solution of $x_j$ is
\[
    x_j\opt(z_j) = \begin{cases}
        -\frac{\eta}{2} \gamma_j &\text{if } z_j = 1, \\
        0 &\text{if } z_j = 0.
    \end{cases}
\]
We thus have
\[
    L(z, \alpha, \beta) =-\beta^\top b -\frac{1}{4} \sum_{i=1}^k \alpha_i^2 - \sum_{j=1}^n  \frac{\eta}{4} \gamma_j^2 z_j,
\]
where $\gamma = c+ V \sqrt{\Lambda} \alpha+A^\top \beta$ and $\gamma_j$ is the $j$-th element of $\gamma$. Rewriting the summations using norm and matrix multiplications completes the proof.
\end{proof}

\section{Principal Component Hierarchy for Sparsity-Penalized Quadratic Programs} \label{sec:OtherFormulation}

The approach proposed in the main paper can be extended to solve the $\|\cdot\|_0$-penalized problem of the form
\begin{equation} \notag
\begin{array}{cl}
    \min & {\langle c, x\rangle}+\langle x, Q x\rangle + \eta^{-1} \|x\|_2^2 + \theta \| x\|_0\\
    \St & x\in\R^n,~A x \leq b
\end{array}
\end{equation}
for some sparsity-inducing parameter $\theta > 0$. The corresponding approximation using $k$ principal components of the matrix $Q$ is
\begin{equation} \label{eq:sparsity-penalty} \tag{$\mathcal{W}_k$}
\begin{array}{cll}
\mathcal{U}_k^{\opt} 
\Let 
&\min & \ds \langle c, x\rangle + \sum_{i=1} ^{k} \lambda_{i}  y_{i}^{2} +\eta^{-1} \|x\|_2^2 + \theta \|x\|_0\\
    &\St & x \in\mathbb{R}^n,~y \in \R^k \\
    && A x \leq b \\
    && \sqrt{\lambda_{i} } y_{i}=\sqrt{\lambda_{i} }\left\langle v_{i}, x\right\rangle \quad i \in [k].
\end{array}
\end{equation}
\begin{proposition}[Min-max characterization] \label{prop:minmax-pen}
For each $k \le n$, the optimal value of problem \eqref{eq:sparsity-penalty} is equal to
\[
    \mathcal{U}_k^{\opt} = \min\limits_{z \in \{0, 1\}^n }~
    \max\limits_{\substack{ \alpha \in \mathbb{R} ^{k} \\\beta \in \R_+^m}}~H(z, \alpha, \beta),
\]
where the objective function $H$ is defined as
\begin{align} \label{eq:H}
&H(z, \alpha, \beta) \Let \theta \sum_{j=1}^n z_j -\beta^\top b -\frac{1}{4} \| \alpha\|_2^2 -\frac{\eta}{4}  (c+ V \sqrt{\Lambda} \alpha + A^\top \beta)^\top \diag(z) (c+ V \sqrt{\Lambda} \alpha + A^\top \beta). 
\end{align}
\end{proposition}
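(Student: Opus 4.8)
The plan is to mirror the proof of Proposition~\ref{prop:minmax} almost verbatim, since problem~\eqref{eq:sparsity-penalty} differs from~\eqref{eq:P_k} only in that the hard cardinality constraint $\|x\|_0 \le s$ is replaced by the additive penalty $\theta\|x\|_0$. The single genuinely new ingredient is a big-$M$ representation of the $\ell_0$-penalty rather than of the $\ell_0$-constraint. First I would write, for $\theta > 0$ and $M$ large enough,
\[
\theta \|x\|_0 = \min_{\substack{z \in \{0,1\}^n \\ |x_j| \le M z_j ~\forall j}} \theta \sum_{j=1}^n z_j,
\]
which is valid because whenever $x_j = 0$ the minimizing choice is $z_j = 0$, while $x_j \neq 0$ forces $z_j = 1$. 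Substituting this into~\eqref{eq:sparsity-penalty} and collecting the two nested minimizations over $z$ and over $(x,y)$ yields
\[
\mathcal{U}_k^\star = \min_{z \in \{0,1\}^n} \left[ \theta \sum_{j=1}^n z_j + \min_{x,y} \Big( \langle c,x\rangle + \textstyle\sum_i \lambda_i y_i^2 + \eta^{-1}\|x\|_2^2 \Big) \right],
\]
where the inner minimization is over the set defined by $Ax \le b$, the box constraints $|x_j| \le M z_j$, and the equalities $\sqrt{\lambda_i} y_i = \sqrt{\lambda_i}\langle v_i, x\rangle$. Crucially, the outer variable $z$ now ranges over all of $\{0,1\}^n$ with no cardinality restriction, matching the index set of the claimed min-max formulation.

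For each fixed feasible $z$ the inner problem is a convex quadratic program in $(x,y)$, and the term $\theta \sum_j z_j$ is constant with respect to $(x,y)$. I would then dualize exactly as before: associate the multiplier $\alpha \in \R^k$ with the equality constraints and $\beta \in \R^m_+$ with $Ax \le b$, and invoke strong duality for the convex QP to interchange the inner minimization with the maximization over $(\alpha,\beta)$. The Lagrangian splits into an unconstrained $y$-subproblem, whose optimal value is $-\tfrac14\|\alpha\|_2^2$, and a separable box-constrained $x$-subproblem, whose optimal value is $-\tfrac{\eta}{4}\sum_j \gamma_j^2 z_j$ with $\gamma = c + V\sqrt{\Lambda}\alpha + A^\top\beta$; both computations are identical to those already carried out for Proposition~\ref{prop:minmax}. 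Since $\theta \sum_j z_j$ does not depend on $(\alpha,\beta)$, it can be pulled inside the maximization, producing precisely the objective $H$ of~\eqref{eq:H}.

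The only point requiring care, and the step I would treat as the main obstacle, is the legitimacy of the manipulations surrounding the big-$M$ reformulation: first, that $M$ is chosen large enough so that the penalty representation above is exact (no optimal $x_j$ is clipped by the box), and second, that the order of the $z$- and $(x,y)$-minimizations together with the inner min-max interchange may be carried out freely. The former is the standard big-$M$ caveat already implicit in Proposition~\ref{prop:minmax}; the latter is justified because, for each fixed $z$, the inner program is a convex QP with affine constraints so that a Slater-type condition gives zero duality gap, while the constant penalty term commutes with every operation. Once these are in place, assembling the $y$- and $x$-subproblem values recovers $\mathcal{U}_k^\star = \min_{z} \max_{\alpha,\beta} H(z,\alpha,\beta)$ and completes the argument.
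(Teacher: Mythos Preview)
Your proposal is correct and follows essentially the same approach as the paper: introduce the big-$M$ reformulation of the $\ell_0$-penalty via binary variables $z\in\{0,1\}^n$, dualize the resulting convex QP in $(x,y)$ for each fixed $z$, and reuse the $y$- and $x$-subproblem evaluations from Proposition~\ref{prop:minmax} to obtain $H$. If anything, you are more explicit than the paper about the legitimacy of the big-$M$ step and the strong-duality justification.
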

\begin{proof}[Proof of Proposition~\ref{prop:minmax-pen}]
The sparsity-penalized principal component approximation problem can be rewritten using the big-$M$ formulation as
\begin{equation*}  
\begin{array}{cll}
\min\limits_{z \in \{0, 1\}^n}  & \min & \ds  \langle c, x\rangle + \sum_{i=1} ^{k} \lambda_{i}  y_{i}^{2} + \eta^{-1} \|x\|_2^2 + \theta \sum_{j=1}^n z_j \\
    & \St & x \in\mathbb{R}^n,~y \in \R^k \\ 
    && \sqrt{\lambda_{i} } y_{i}=\sqrt{\lambda_{i} }\left\langle v_{i}, x\right\rangle \quad i \in [k] \\
    && |x_j|\leq M z_j \hspace{16.4mm} j \in [n] \\
    && A x \leq b.
\end{array}
\end{equation*}
For any feasible solution $z$, the inner minimization problem is a convex quadratic optimization problem. By strong duality, we have the equivalent problem
\[
    \mathcal{U}_k^{\opt} = \min\limits_{z \in \{0, 1\}^n }~
    \max\limits_{\substack{ \alpha \in \mathbb{R} ^{k} \\\beta \in \R_+^m}}~H(z, \alpha, \beta),
\]
where the objective function $H$ is
\begin{align*}
H(z, \alpha, \beta) &= -\beta^\top b + \min_{y \in \R^k}~y^\top \sqrt{\Lambda} y - \alpha^\top \diag(\sqrt{\Lambda}) y +
 \min_{\substack{x \in\mathbb{R}^n\\ |x_j|\leq M z_j  ~\forall j}}\Big\langle c + V \diag(\sqrt{\Lambda}) \alpha + A^\top \beta, x\Big\rangle + \eta^{-1} \|x\|_2^2 + \theta \sum_{j=1}^n z_j.
\end{align*}

Following proposition 3.1 we can calculate the optimal values for $y\opt$ and $x\opt$.
Considering the $x$-subproblem, let $\gamma = c+ V \sqrt{\Lambda} \alpha+A^\top \beta$ and $\gamma_j$ be the $j$-th element of $\gamma$. The big-$M$ equivalent formulation for the $x$-subproblem admits the form
\begin{align}
    \min_{\substack{x \in \R^n \\ |x_j| \le M z_j~\forall j}}~\sum_{j=1}^n \gamma_j x_j + \frac{x_j^2}{\eta} + \theta z_j     = &\sum_{j=1}^n \min_{\substack{x \in \R^n \\ |x_j| \le M z_j~\forall j}}~ \gamma_j x_j + \frac{x_j^2}{\eta} + \theta z_j \notag \\
    = &\sum_{j=1}^n (- \frac{\eta}{4} \gamma_j^2 + \theta) z_j , \label{eq:knapsackd}
\end{align}
where the last equality exploits the fact that the optimal solution of $x_j$ is
\[
    x_j\opt(z_j) = \begin{cases}
        -\frac{\eta}{2} \gamma_j &\text{if } \frac{\eta}{4} \gamma_j^2  > \theta , \\
        0 &\text{if } \frac{\eta}{4} \gamma_j^2  \leq \theta.
    \end{cases}
\]
As a consequence, we have
\[
    H(z, \alpha, \beta) =-\beta^\top b -\frac{1}{4} \sum_{i=1}^k \alpha_i^2 + \sum_{j=1}^n (- \frac{\eta}{4} \gamma_j^2 + \theta) z_j,
\]
where $\gamma = c+ V \sqrt{\Lambda} \alpha+A^\top \beta$ and $\gamma_j$ is the $j$-th element of $\gamma$. Rewriting the summations using norm and matrix multiplications completes the proof.
\end{proof}

\begin{lemma}[Closed-form minimizer]\label{lem:closed form H}
Given any pair~$(\alpha, \beta)$, the minimizer of the function~$H$ defined in~\eqref{eq:H} can be computed as
\begin{align*}
    &\arg\min_{z \in \{0, 1\}^n} H(z, \alpha, \beta) =
    \mathbb{I}\big\{ \frac{\eta}{4} \diag((c\!+\!V \sqrt{\Lambda} \alpha\!+\!A^\top \beta) (c\!+\!V \sqrt{\Lambda} \alpha\!+\!A^\top \beta)^\top) > \theta \big\},
\end{align*}
where $\mathbb I$ is the component-wise indicator function and the $\diag$ operator here returns the vector of diagonal elements of the input matrix. 
\end{lemma}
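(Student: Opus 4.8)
The plan is to reduce $H(z,\alpha,\beta)$, for fixed $(\alpha,\beta)$, to a separable function of the coordinates of $z$ and then minimize coordinate-by-coordinate. First I would introduce the shorthand $\gamma \Let c + V\sqrt{\Lambda}\alpha + A^\top\beta$, exactly as in the proof of Proposition~\ref{prop:minmax-pen}, and observe that $\gamma^\top \diag(z)\gamma = \sum_{j=1}^n \gamma_j^2 z_j$. Substituting this into the definition~\eqref{eq:H} gives
\[
H(z,\alpha,\beta) = -\beta^\top b - \tfrac14\|\alpha\|_2^2 + \sum_{j=1}^n \Big(\theta - \tfrac{\eta}{4}\gamma_j^2\Big) z_j,
\]
so that, viewing $(\alpha,\beta)$ as fixed, $H$ is an affine function of $z$ whose only $z$-dependence is the separable sum on the right.

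The key structural point --- and the reason the minimizer takes the stated threshold form rather than the top-$s$ form of Lemma~\ref{lem:closed form} --- is that the minimization is over the entire cube $\{0,1\}^n$ with no cardinality constraint $\sum_j z_j \le s$ present. Consequently the problem decouples completely across the $n$ coordinates: I would minimize each scalar term $(\theta - \tfrac{\eta}{4}\gamma_j^2)z_j$ over $z_j \in \{0,1\}$ independently. Since the constant terms $-\beta^\top b - \tfrac14\|\alpha\|_2^2$ do not affect the argmin, the coordinatewise optimum is $z_j^\star = 1$ precisely when the coefficient $\theta - \tfrac{\eta}{4}\gamma_j^2$ is negative, i.e.\ when $\tfrac{\eta}{4}\gamma_j^2 > \theta$, and $z_j^\star = 0$ otherwise. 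Recognizing $\gamma_j^2$ as the $j$-th diagonal entry of $\gamma\gamma^\top$ then yields exactly the component-wise indicator claimed in the statement.

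There is essentially no analytical obstacle here; the lemma is an immediate consequence of the separability. The only point requiring care is the tiebreak at equality $\tfrac{\eta}{4}\gamma_j^2 = \theta$, where both choices of $z_j$ attain the same objective value. The strict inequality in the indicator encodes the convention $z_j^\star = 0$ in that degenerate case, consistent with the deterministic tiebreaker adopted after Lemma~\ref{lem:closed form}; I would note this explicitly to make the argmin single-valued. I would close the proof immediately after identifying the minimizer, since rewriting the per-coordinate threshold as the stated $\diag(\cdot)$ expression is purely notational.
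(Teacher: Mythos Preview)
Your proof is correct and follows the same separability argument that the paper intends; the paper simply states that the lemma ``immediately follows from~\eqref{eq:H}'' without spelling out the coordinatewise minimization you detail. Your treatment is more explicit but substantively identical.
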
 
This lemma immediately follows from \eqref{eq:H}.

\end{document}